\newtheorem{Theorem}{Theorem}[section]
\newtheorem{Lemma}{Lemma}[section]
\newtheorem{Proposition}{Proposition}[section]
\newtheorem{Corollary}{Corollary}[section]
\theoremstyle{definition}
\theoremstyle{remark}
\newtheorem{Remark}{Remark}[section]
\numberwithin{equation}{section}
\newcommand{\eps}{\varepsilon}
\newcommand{\R}{{\mathbb R}}
\newcommand{\T}{{\mathcal T}}
\def\f{\frac}
\def\hf1{^\f{1}{1-\xi^2}}
\def\be{\begin{equation}}
\def\en{\end{equation}}
\def\bs{\begin{split}}
\def\es{\end{split}}
\author{Didier Bresch}
\address{LAMA CNRS UMR5127, Univ. Savoie Mont-Blanc,
73376 Le Bourget du Lac, France.}
\email{didier.bresch@univ-smb.fr}
\author{Pierre-Emmanuel Jabin}
\address{CSCAMM and departement of Mathematics, Univ. of Maryland, College Park, MD, USA}
\email{pjabin@cscamm.umd.edu}
\author{Zhenfu Wang}
\address{Department of Mathematics, Univ. of Pennsylvania, Phyladelphia, PA USA}
\email{zwang423@math.upenn.edu}
\title[Modulated Free Energy and Mean Field Limit]
{Modulated Free Energy and Mean Field Limit}
\subjclass[2010]{35Q35, 76N10}
\keywords{}
\date{\today}
\begin{document}

\begin{abstract}
This is the document corresponding to the talk the first author gave at IH\'ES for the Laurent Schwartz seminar on November 19, 2019.  It concerns our recent introduction of a modulated free energy
in mean-field theory in  \cite{BrJaWa}. This  physical object may be seen as a combination of the modulated potential energy introduced by S. Serfaty [See \it Proc. Int. Cong. Math. \rm  (2018)] and of  the relative entropy introduced in mean field limit theory by P.--E. Jabin, Z. Wang [See \it Inventiones \rm  2018].  It allows to obtain, for the first time, a convergence rate in the mean field limit for Riesz and Coulomb repulsive kernels in the presence of viscosity using the estimates in \cite{Du} and \cite{Se1}. The main objective in this paper is to  explain how it is possible to cover more general repulsive kernels through a Fourier transform approach as announced in \cite{BrJaWa} first in the case $\sigma_N \to 0$ when $N\to +\infty$ and then if $\sigma>0$ is fixed. Then we end the paper with comments on the particle approximation 
of the Patlak-Keller-Segel system which is associated to an attractive kernel and refer to
 [{\it C.R. Acad Science Paris } 357, Issue 9, (2019), 708--720] by the authors for more details.
\end{abstract}

\maketitle

\section{Introduction} This paper concerns interaction particles system and quantitative estimates in mean field limit theory in the spirit of \cite{Se1}, \cite{Du} and \cite{JaWa}.   Namely we consider $N$ particles, identical and interacting two by two through a kernel $K$. For simplicity, we consider periodic boundary conditions and we assume the  position of the i-th particle  $X_i(t) \in \Pi^d$  evolves through as follows
$$dX_i = \frac{1}{N} \sum_{j\not=i} K(X_i-X_j) dt + \sqrt{2\sigma} dW_i$$
for $N$ independent Brownian motions $W_i$ with
a gradient flow hypothesis $K= - \nabla V$ where $V\in L^1(\Pi^d)$ will be discussed later-on:
For possible vanishing viscosity with respect to $N$ namely  $\sigma_N\to 0$ when $N\to +\infty$ we consider singular repulsive kernels with some pointwise and Fourier controls (including Riesz and Coulomb Kernels) and for $\sigma>0$ we consider more general singular repulsive kernels with Fourier control and then we conclude with some comments on the Patlak-Keller-Segel attractive kernel. The main objective in this document is to give some details in the repulsive case to complete the note written by the authors [{\it C.R. Acad Sciences} 357, Issue 9, (2019), 708--720] which focused on the attractive Patlak-Keller-Segel kernel. A full paper  is still in  progress to propose a single complete document, see \cite{BrJaWa1}.
Readers interested by some reviews on mean field limit for stochastic particle systems are referred to \cite{JaWa1}, \cite{Sa} and on mean field limit for deterministic particle systems are referred to \cite{Go}, \cite{Ja}, \cite{Se1}.

\medskip
\noindent As usually, we introduce $\rho_N$ the joint law of the process $(X_1, \cdots, X_N)$ which satisfies the Liouville equation
$$\partial_t \rho_N + \sum_{i=1}^N {\rm div}_{x_i}
 \left(\rho_N \frac{1}{N} \sum_{j\not= i}^N K(x_i-x_j) \right) = \sigma \sum_{i=1}^N \Delta_{x_i} \rho_N.$$
The main objective is to obtain a rate of convergence with respect to the number of particles $N$  from $\rho_N$ to  $\bar\rho_N= \bar\rho^{\otimes N} = \Pi_{i=1}^N \bar\rho$ with
$$\partial_t \bar\rho + {\rm div} (\bar\rho u) = \sigma \Delta \bar\rho,
   \qquad u = - \nabla V \star_x \bar\rho$$
where $\bar\rho$ is a 1-particle distribution namely $\bar\rho\ge 0$ and $\int\bar\rho = 1$.  
More precisely,  the main objective is to prove (for $\sigma>0$ fixed) that there exists constants $C_{T,\bar \rho, k} >0$ and $\theta>0$ such that  
$$\|\rho_{N,k} - \Pi_{i=1}^k \bar \rho(t,x_i)\|_{L^1(\Pi^{kd})}  \le C_{T,\bar \rho, k} N^{-\theta}$$
where $\rho_{N,k}$ is the marginal of the system at a fixed rank $k$,
$$
\rho_{N,k}(t,x_1,\ldots,x_k)=\int_{\T^{(N-k)\,d}} \rho_N(t,x_1,\ldots,x_N)\,dx_{k+1}\ldots x_{N}.
$$
under assumptions of global existence of entropy-weak solutions $\rho_N$  of the
Liouville equation  and global existence of classical solution $\bar\rho$ of the limiting system. 
Due to the Csisz\'ar-Kullback-Pinsker Inequality, it suffices to control the quantity
$$\frac{1}{k} \int_{\Pi^{kd}} \rho_{N,k} \log \frac{\rho_{N,k}}{\bar\rho^{\otimes k}}$$
and therefore
$$\frac{1}{N} \int_{\Pi^{Nd}} \rho_N \log \frac{\rho_N}{\bar\rho_N}
$$
due to the inequality
$$\frac{1}{k} \int_{\Pi^{kd}} \rho_{N,k} \log \frac{\rho_{N,k}}{\bar\rho^{\otimes k}}
\le \frac{1}{N} \int_{\Pi^{Nd}} \rho_N \log \frac{\rho_N}{\bar\rho_N}.
$$
In the case $\sigma_N \to 0$ when $N\to +\infty$, the information will be related to a rescaled modulated potential energy similar to
the one introduced by S.  Serfaty.

As explained in \cite{Se} (and commented by F.  Golse during the talk) the keys of the paper by S. Serfaty is to introduce a clever truncation  of the kernel  at a length-scale $r_i$ depending on the point $i$ and equal to a minimal distance from $x_i$ to its nearest neighbors. Using such truncation  it is possible to prove that the truncated energy can be controlled by the full energy and conversely.
The next crucial result is that, even though positivity is lost, when using the $r_i$™ as truncation parameters she can still control for each time slice the expression by the modulated energy itself, up to a small error , and provided the limiting solution is regular enough. This, which is the most difficult part of her proof, uses two ingredients: the first is to re-express as a single integral in terms of a stress-energy tensor, and the second is to show that the expression  is in fact close to the same expression with truncated fields. 

In this document, we will explain how to construct an appropriate regularized kernel (Lemma \ref{lemmatruncate}) which in some sense play the role
of the appropriate truncation in \cite{Se1}. In the case $\sigma_N\to 0$, it will be used to control the contribution to the potential energy for close particles switching between the kernel and its regularization. It will also be used  coupled with a Fourier transform property to get rid of the re-expression as a single integral in terms of a stress-energy tensor. In the case $\sigma>0$ fixed the Fourier transform hypothesis may be relaxed playing with the classical convexity inequality \eqref{ineg} and large deviation type estimates.

    The paper is divided in seven sections. Section 1 is the present introduction. Section 2  is dedicated to the modulated free energy \eqref{freeenergy} (with $G_N$ and $G_{\bar\rho_N}$ defined respectively by \eqref{GN} and \eqref{G}) introduced by the authors in \cite{BrJaWa1} which allows to make the link between \cite{Se1} and \cite{JaWa}. The analysis is based on the important inequality \eqref{INEG} with \eqref{IN} 
with Gronwall Lemma and some appropriate estimates. Section 3 concerns the assumptions \eqref{hyp01}--\eqref{hyp06} on the kernels and the main results Theorem \ref{Conv} and Theorem \ref{Convgene}. We focus on repulsive kernels and provide Theorems first in the case $\sigma_N\to 0$ when $N\to +\infty$ and secondly when $\sigma$ is fixed. We provide simple comments for the attractive kernel corresponding to the Patlak-Keller-Segel system and refer to \cite{BrJaWa} for more details. Section 4 is dedicated to an important regularization lemma which will  helpful to switch between the kernel and an appropriate regularized one. Section 5 and Section 6 concern the proofs of different controls from below and above coupled with inequality \eqref{INEG} with \eqref{IN} firstly when $\sigma_N \to 0$ when $N\to +\infty$ and secondly when $\sigma$ is fixed. Section 7 concludes the proofs of  Theorem \ref{Conv} and Theorem \ref{Convgene} from Gronwall arguments.  In Section 8, we provide comments on the interesting attractive case corresponding to the particle approximation of the Patlak-Keller-Segel system. We end the paper by Section 9 with some
comments and open problems.

\section{Physics provides the right mathematical Object} As firstly introduced by the authors in \cite{BrJaWa}, keeping advantage of the idea to introduce appropriate weights from \cite{BrJa}, we define the following modulated free energy
\begin{equation}
\label{freeenergy}
E\Bigl(\frac{\rho_N}{G_N}\vert \frac{\bar\rho_N}{G_{\bar\rho_N}}\Bigr) 
= \frac{1}{N} \int_{\Pi^{dN}} \rho_N (t,X^N) \log\left(\frac{\rho_N(t,X^N)}{G_N(X^N)} 
   \frac{G_{\bar\rho_N}(t,X^N)}{\bar\rho_N(t,X^N)}\right)dX^N.
\end{equation}
where the Gibbs equilibrium $G_N$ of the system and $G_{\bar\rho_N}$ the corresponding 
distribution where the exact field is replaced by the mean field limit according to the law $\bar\rho$
are given by
\begin{equation}\label{GN}
G_N(t,X^N) = \exp \left( - \frac{1}{2N\sigma} \sum_{i\not = j} V(x_i-x_j)\right)
\end{equation}
and
\begin{equation} \label{G}
G_{\bar\rho_N}(t,X^N) = \exp\left(-\frac{1}{\sigma} \sum_{i=1}^N V\star \bar\rho(x_i)
        + \frac{N}{2\sigma} \int_{\Pi^d} V\star \bar\rho \bar\rho  \right)
\end{equation}
It may be written
$$E\Bigl(\frac{\rho_N}{G_N}\vert \frac{\bar\rho_N}{G_{\bar\rho_N}}\Bigr) 
= {\mathcal H}_N(\rho_N\vert \bar\rho_N) + {\mathcal K}_N(G_N\vert  G_{\bar\rho_N}) 
$$
where 
\begin{equation}\label{relativeentropy}
{\mathcal H}_N(\rho_N\vert \bar\rho_N) 
= \frac{1}{N} \int_{\Pi^{dN}} \rho_N(t,X^N) \log\left( \frac{\rho_N(t,X^N)}{\bar\rho_N(t,X^N)}\right) dX^N
\end{equation}
is exactly the relative entropy introduced by Jabin-Wang  and
\begin{equation}\label{modulatedenergy}
{\mathcal K}_N(G_N\vert G_{\bar\rho_N}) =
   - \frac{1}{N} \int_{\Pi^{dN}} \rho_N(t,X^N) 
     \log \left(\frac{G_N(X^N)}{G_{\bar\rho_N}(t,X^N)}\right) dX^N
\end{equation}
is the expectation of the modulated potential energy introduced by S. Serfaty multiplied
by $1/\sigma$. It is then possible to show that the free energy has the right algebraic structure for any
$V$ even. More precisely, using smoothing properties and definition of global entropy-weak solution of
the Liouville system and global classical solution of the limit system, we get the inequality
\begin{equation} \label{INEG}
\begin{split}
&E_{N}\left(\frac{\rho_N}{ G_{N}} \,\vert\;\frac{\bar\rho_N}{G_{\bar\rho_N}}\right)(t) 
+  \frac{\sigma}{N}\,\int_0^t \int_{\Pi^{d\,N}} d\rho_N\,\left|\nabla\log \frac{\rho_N}{\bar\rho_N}-\nabla\log \frac{ G_N}{G_{\bar\rho_N}}\right|^2
\\
& \hskip7cm  \le 
E_{N}\left(\frac{\rho_N}{ G_{N}} \,\vert\;\frac{\bar\rho_N}{G_{\bar\rho_N}}\right)(0) 
 + I_N
 \end{split}
 \end{equation}
 where
\begin{equation}\label{IN}
\begin{split}
& I_N =  
 -\frac{1}{2} \int_0^t\int_{\Pi^{dN}} \int_{\Pi^{2\,d}\cap \{x\neq y\}}  \nabla V(x-y) \cdot \left(\nabla\log \frac{\bar\rho}{G_{\bar \rho}}(x)
  - \nabla\log \frac{\bar\rho}{G_{\bar \rho}}(y)\right) \\
  &\hskip10cm  (d\mu_N - d\bar\rho)^{\otimes 2} d\rho_N,
  \end{split}
\end{equation}
where $\mu_N = \sum_{i=1}^N \delta(x-x_i)/N$ is the empirical measure. It is important to
note that the right-hand side is exactly the expectation of the quantity obtained by S. Serfaty
with the modulated potential energy when $\sigma = 0$  and to remark that the parasite terms 
involving ${\rm div} K$ in the work by P.--E. Jabin and Z. Wang have disappeared. 
  In order to use Gronwall Lemma, the previous expression leaves two main points in the proof namely the existence
 $\theta>0$ and $C>0$ such that we have
 
 \noindent {I) \it An upper bound of $I_N$ given by \eqref{IN}.} If the viscosity $\sigma>0$ is fixed
namely
$$ I_N \le \int_0^t E_{N}\left(\frac{\rho_N}{ G_{N}} \,\vert\;\frac{\bar\rho_N}{G_{\bar\rho_N}}\right)(s) \, ds + \frac{C}{N^\theta}$$
and  if $\sigma_N \to 0$ when $N\to+\infty$
$$
\sigma_N I_N  \le  C \sigma_N \int_0^t  {\mathcal K}_N \left(\frac{\rho_N}{ G_{N}} \,\vert\;\frac{\bar\rho_N}{G_{\bar\rho_N}}\right)(s) \, ds 
                 + \frac{C}{N^\theta}
$$
\noindent {II) \it A control from below on modulated quantities.} For $\sigma>0$ fixed,  the modulated free energy $E_N$ is almost positive or more specifically that for some constant
$$E_N\left(\frac{\rho_N}{G_N}\vert\frac{\bar\rho_N}{G_{\bar\rho_N}}\right) (t)
   \ge \frac{1}{C} {\mathcal H}_N(\rho_N\vert\bar\rho_N)(t) - \frac{C}{N^\theta}$$
  and, for $\sigma_N \to 0$ when $N\to +\infty$, the rescaled modulated potential energy is almost positive namely  
$$\sigma_N {\mathcal K}_N \ge - \frac{C}{N^\theta}.$$

\begin{Remark} Combining the relative entropy with a modulated energy has bee successively used for various limit in kinetic theory such as quasi neutral limit (see  \cite{Kw}, \cite{PuSa}) or Vlasov-Maxwell-Boltzmann to incompressible viscous EMHD  (see \cite{ArSa}).
\end{Remark}

\begin{Remark} Note the presence of modulated Fisher type information 
\begin{equation}\label{diffusive}
{\mathcal D} = 
\frac{\sigma}{N}\,\int_0^t \int_{\Pi^{d\,N}} d\rho_N\,\left|\nabla\log \frac{\rho_N}{\bar\rho_N}-\nabla\log \frac{ G_N}{G_{\bar\rho_N}}\right|^2
\end{equation}
in the inequality \eqref{IN}. 
\end{Remark}

\section{Assumptions and main results.} We will split the discussions in three parts.
The first part concerns the case with viscosity $\sigma\to 0$ when $N\to +\infty$. We show how to consider
more general singular kernels than in \cite{Se}, \cite{Du} extending their methods.
The second part concerns a fixed viscosity $\sigma>0$ using the modulated free
energy where we indicate kernels that may be considered. In the last part,
we give comments regarding the Patlak-Keller-Segel system which concerns
an attractive kernels and viscosity $2d \sigma>\lambda$ where $\lambda$ measures
the intensity of the kernel.

\medskip

\noindent {I) \bf Repulsive Kernels.} In this part, the first assumptions on $V$ are 
\begin{equation} \label{hyp01}
V(-x)=V(x),\quad V\in L^p(\Pi^d) \hbox{ for some } p>1
\end{equation}
with following Fourier sign 
\begin{equation}\label{hyp03} \hat V(\xi) \ge 0 \hbox{ for all } \xi \in \R^d
\end{equation}
and the following pointwise controls for all $x\in {\mathbb T}^d$: There exists constants 
$k$ and $C>0$ such that
\begin{equation} \label{hyp001}
 |\nabla V(x)|\leq \frac{C}{|x|^k},\quad |\nabla^2 V(x)|\leq \frac{C}{|x|^{k'}} \qquad  \hbox{ for some } k,k'>1/2
\end{equation}
and 
\begin{equation}\label{hyp02} 
|\nabla V(x)|\leq C\,\frac{V(x)}{|x|}.
\end{equation}
We also assume that
\begin{equation}\label{hyp04}\lim_{|x|\to 0} V(x)= +\infty, \qquad
 V(x) \le C\, V(y) \hbox{ for all } |y|\le 2 |x|
\end{equation}
and  
\begin{equation}\label{hyp05} 
 |\nabla_\xi \hat V(\xi)| \le \frac{C}{1+|\xi|} \left(\hat V(\xi)+   f(\sigma)\frac{1}{1+|\xi|^{d-\alpha}}\right)
  \hbox{ with }  0<\alpha<d   \hbox{ for all } \xi\in \R^d 
 \end{equation}
 where 
 \begin{equation}\label{hyp06} 
f(\sigma) = 0 \hbox{ if } \sigma \to 0 \hbox{ and } f(\sigma)=1 \hbox{ if } \sigma \hbox{ is fixed.}
 \end{equation}

\begin{Remark}
Remark that Riesz and Coulomb kernels  perturbed to get periodic kernels satisfy hypothesis \eqref{hyp01}--\eqref{hyp06}. 
Note that constraints $k,k'>1/2$ are chosen for simplicity in the argument: the results cover any $k$ and $k'$.
\end{Remark}
\medskip

\noindent {I-1)  \it  Case $\sigma_N \to 0$ and repulsive kernels.} 
   The convergence rate theorem reads as follows
  \begin{Theorem}\label{Conv}
  Assume $K=-\nabla V$ with $V$ satisfying \eqref{hyp01}--\eqref{hyp06} with $f(\sigma)=0$.
  Consider $\bar \rho$ a smooth enough solution  with $\inf\bar\rho>0$. There exists constants $C>0$ and a function $\eta(N)$ with $\eta(N)\to 0$ as $N\to \infty$ s.t. for $\bar \rho_N=\Pi_{i=1}^N \bar \rho(t, x_i)$, and for the  joint law $\rho_N$ on $\Pi^{dN}$ of any entropy-weak solution to the SDE system, 
\[\begin{split}
& \sigma_N\,\mathcal{K}_N(t)\leq e^{C_{\bar\rho}\,\|K\| \,t}\,\left(\sigma_N\,\mathcal{K}_N(t=0)+\sigma_N\,\mathcal{H}_N(t=0)+\eta(N)\right),\\
\end{split}\]
Hence if $\sigma_N\,\mathcal{K}_N^0+\sigma_N\,\mathcal{H}_N^0\leq \eta(N)$, for any fixed marginal $\rho_{N,k}$ 
      \[
W_1(\rho_{N,k},\;\Pi_{i=1}^k \bar\rho(t, x_i))\leq C_{T,\bar\rho,k}\,\eta(N),
      \]
 where $W_1$ is the Wasserstein distance.
\end{Theorem}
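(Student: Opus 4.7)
The plan is to turn the master inequality \eqref{INEG}, multiplied by $\sigma_N$, into a closed Gronwall inequality for $\sigma_N\,\mathcal{K}_N(t)$. Since $\sigma_N\mathcal{H}_N\ge 0$ and the modulated Fisher term $\mathcal{D}\ge 0$, dropping these non-negative pieces reduces the task to the two ingredients stated just after \eqref{IN}: a lower bound $\sigma_N\mathcal{K}_N\ge -C/N^\theta$, and an upper bound $\sigma_N I_N\le C\int_0^t \sigma_N\mathcal{K}_N(s)\,ds + C/N^\theta$. The three devices that should make both ingredients work are the Fourier positivity \eqref{hyp03}, the Fourier-side decay \eqref{hyp05} with $f(\sigma_N)=0$, and the regularization Lemma~\ref{lemmatruncate}.

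A direct computation from \eqref{GN}--\eqref{G} rewrites the rescaled modulated energy as
\[
\sigma_N\,\mathcal{K}_N(t)=\frac12\int_{\Pi^{dN}}\rho_N\int_{\Pi^{2d}\cap\{x\neq y\}}V(x-y)\,(d\mu_N-d\bar\rho)^{\otimes 2}\,dX^N,
\]
so that the diagonal restriction is precisely what forbids reading positivity off $\hat V\ge 0$. I would therefore invoke Lemma~\ref{lemmatruncate} to replace $V$ by a regularized $V_r$ with $r=N^{-a}$ for a suitable $a>0$; the kernel $V_r$ is everywhere defined and inherits $\hat V_r\ge 0$. The error $V-V_r$ is essentially supported near the origin and controlled pointwise via \eqref{hyp04}, so its contribution is $O(N^{-\theta})$. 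Plancherel then gives the unrestricted expression $\int\hat V_r\,|\widehat{\mu_N-\bar\rho}|^2\,d\xi\ge 0$, and the restored diagonal self-interaction $V_r(0)/N$ contributes again $O(N^{-\theta})$ for an appropriate $a$.

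The upper bound on $\sigma_N I_N$ is the main obstacle. Setting $\psi:=\nabla\log(\bar\rho/G_{\bar\rho})$, which is smooth and bounded thanks to the assumed regularity of $\bar\rho$ and to $\inf\bar\rho>0$, I would again replace $\nabla V$ by $\nabla V_r$ in \eqref{IN} via Lemma~\ref{lemmatruncate}, absorbing the discrepancy through the pointwise bounds \eqref{hyp001}--\eqref{hyp02}. The remaining integrand carries the commutator $\nabla V_r(x-y)\cdot(\psi(x)-\psi(y))$, whose Fourier counterpart involves $\nabla_\xi\bigl(\xi\,\hat V_r(\xi)\bigr)$ tested against $\widehat{\mu_N-\bar\rho}$. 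Hypothesis \eqref{hyp05} with $f=0$ supplies $|\nabla_\xi\hat V_r(\xi)|\le C\,\hat V(\xi)/(1+|\xi|)$, and a Cauchy--Schwarz in $\xi$ then majorizes the contribution by $C\|\psi\|_{W^{1,\infty}}\int\hat V(\xi)\,|\widehat{\mu_N-\bar\rho}(\xi)|^2\,d\xi$, which equals $C\sigma_N\mathcal{K}_N$ up to the same regularization residual. Taking the $\rho_N$-expectation and integrating in time closes the Gronwall loop.

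Gronwall applied to the non-negative quantity $\sigma_N\mathcal{K}_N(t)+C/N^\theta$ yields the stated exponential estimate with $\eta(N)=C/N^\theta$. Running the same scheme on the sum $\sigma_N(\mathcal{K}_N+\mathcal{H}_N)$, and then using $\sigma_N\mathcal{K}_N\ge -C/N^\theta$, produces a bound on $\sigma_N\mathcal{H}_N(t)$; the tensorization inequality recalled just after \eqref{modulatedenergy} transfers it to each $k$-marginal, and Csisz\'ar--Kullback--Pinsker combined with $W_1\le \mathrm{diam}(\Pi^{kd})\,\|\cdot\|_{TV}$ on the torus converts the resulting relative entropy into the announced Wasserstein bound. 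The technical crux, in my view, is the control of $I_N$: extracting the commutator gain uniformly in $N$ despite the singularity of $\nabla V$ is exactly what forces the Fourier-derivative assumption \eqref{hyp05}, and it is what justifies writing \eqref{hyp05} in the particular form given.
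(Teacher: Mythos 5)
Your architecture does coincide with the paper's: rescale \eqref{INEG} by $\sigma_N$, prove $\sigma_N\mathcal{K}_N\ge -\eta(N)$ by switching to a regularized kernel with nonnegative Fourier transform (Lemma \ref{lemmatruncate} feeding Lemma \ref{KNgeq0}), bound $I_N$ by $C\,\sigma_N\mathcal{K}_N+\eta(N)$ via the Fourier commutator estimate built on \eqref{hyp05} (Lemma \ref{lemfourier} and Corollary \ref{controlINvanishing}), and close with Gronwall. But three steps do not hold as you state them. First, $\psi=\nabla\log(\bar\rho/G_{\bar\rho})$ is \emph{not} bounded uniformly in $N$: by \eqref{G}, $\nabla \log G_{\bar\rho}=-\sigma_N^{-1}\,\nabla V\star\bar\rho$, so $\psi=O(1/\sigma_N)$. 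The paper takes $\psi=\tfrac{\sigma_N}{2}\nabla\log(\bar\rho/G_{\bar\rho})$ after the rescaling and stresses that only this $\sigma_N$ factor makes $\psi\in W^{s,\infty}$ uniformly; your bookkeeping is repairable, but the justification "bounded thanks to the regularity of $\bar\rho$" is false as written. Second, and more seriously, the exchange $\nabla V\to\nabla V_r$ in $I_N$ cannot be "absorbed through the pointwise bounds" near the diagonal: there $\nabla V-\nabla V_r$ is as singular as $\nabla V$ itself and is tested against $\mu_N^{\otimes2}$, which may charge arbitrarily close pairs. The paper splits at $|x-y|=\delta$, performs the exchange only for $|x-y|>\delta$ (where Lemma \ref{lemmatruncate} gives an $O(\eps/\delta^{k'})$ error), and bounds the near-diagonal contributions of $V$ and of $V_\eps$ \emph{separately} by $C\,\sigma_N\mathcal{K}_N+\eta(N)+\eta(\delta)$, using $|\nabla V|\le C\,V/|x|$, the Lipschitz bound on $\psi$, and crucially the second point of Lemma \ref{KNgeq0}, namely $\mathbb{E}\bigl(N^{-2}\sum_{i\neq j}V(x_i-x_j)\,\mathbb{I}_{|x_i-x_j|\le\delta}\bigr)\le 4\,\sigma_N\mathcal{K}_N+\eta(N)+\eta(\delta)$. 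That estimate --- the interaction energy of close pairs is itself dominated by the modulated energy --- is the technical heart of the vanishing-viscosity case and is absent from your sketch; without it the near-diagonal part of $I_N$ is not closed.

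Third, your route to the final bound does not give the stated conclusion. The Gronwall argument only controls $\sigma_N\mathcal{H}_N(t)$, so Csisz\'ar--Kullback--Pinsker yields $\|\rho_{N,k}-\bar\rho^{\otimes k}\|_{TV}\lesssim\sqrt{k\,\eta(N)/\sigma_N}$, which degrades, and can fail to vanish, as $\sigma_N\to0$; it certainly does not produce the rate $\eta(N)$. The conclusion has to be extracted from $\sigma_N\mathcal{K}_N(t)\le C\,\eta(N)$ itself: by the Fourier positivity \eqref{hyp03} and the singularity of $V$ at the origin, this quantity dominates (in expectation) a weighted negative Sobolev norm of $\mu_N-\bar\rho$, which in turn controls the Wasserstein distance of the marginals. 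This is exactly why Theorem \ref{Conv} is stated in $W_1$ while Theorem \ref{Convgene}, where $\mathcal{H}_N$ itself is small, is stated in $L^1$.
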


\smallskip
\noindent {I-2)  \it Case  $\sigma>0$ fixed with respect to $N$ and repulsive kernels.}
The convergence rate theorem reads as follows
 \begin{Theorem}\label{Convgene}
  Assume $K=-\nabla V$ with $V$ satisfying \eqref{hyp01}--\eqref{hyp06} with $f(\sigma)=1$.
  Consider $\bar \rho$ a smooth enough solution  with $\inf\bar\rho>0$. There exists  constants $C>0$ and $\theta>0$ s.t. for $\bar \rho_N=\Pi_{i=1}^N \bar \rho(t, x_i)$, and for the joint law $\rho_N$ on $\Pi^{dN}$ of any entropy-weak solution to the SDE system, 
\[\begin{split}
& H_N(t)+ |\mathcal{K}_N(t)| \leq e^{C_{\bar\rho}\,\|K\| \,t}\,\left(H_N(t=0)+ |\mathcal{K}_N(t=0)|+\frac{C}{N^\theta}\right),\\
\end{split}\]
Hence if $H_N^0+|\mathcal{K}_N^0|\leq C\,N^{-\theta}$, for any fixed marginal $\rho_{N,k}$ 
\[
\|\rho_{N,k}-\Pi_{i=1}^k \bar\rho(t, x_i)\|_{L^1(\Pi^{k\,d})}\leq C_{T,\bar\rho,k}\,N^{-\theta}.
\]
\end{Theorem}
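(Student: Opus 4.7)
The strategy is to start from the fundamental inequality \eqref{INEG} and reduce the proof of Theorem \ref{Convgene} to the two structural facts (I) and (II) advertised in Section~2, then close the estimate with Gronwall's lemma and finish with Csisz\'ar--Kullback--Pinsker. Concretely, I would first drop the nonnegative modulated Fisher information \eqref{diffusive} from \eqref{INEG} to obtain
\[
E_N\!\left(\tfrac{\rho_N}{G_N}\,\big|\,\tfrac{\bar\rho_N}{G_{\bar\rho_N}}\right)\!(t)\;\le\;E_N\!\left(\tfrac{\rho_N}{G_N}\,\big|\,\tfrac{\bar\rho_N}{G_{\bar\rho_N}}\right)\!(0)+I_N,
\]
and then split the work into proving the two ingredients (I) and (II) stated in the introduction, with the respective rates $C/N^\theta$.

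The heart of the argument is the upper bound on $I_N$ in the regime $\sigma>0$ fixed. Here I would take the Fourier transform on $\Pi^d$ in the integrand of \eqref{IN}: writing $\nabla V(x-y)$ as the Fourier multiplier $i\xi\hat V(\xi)$ and testing against $\nabla\log(\bar\rho/G_{\bar\rho})(x)-\nabla\log(\bar\rho/G_{\bar\rho})(y)$, the hypothesis \eqref{hyp05} with $f(\sigma)=1$ produces a control of the $\xi$-derivative of $\hat V$ in terms of $\hat V$ itself plus an integrable remainder. This lets me express the bracket in $I_N$ as a quadratic form in $\widehat{\mu_N-\bar\rho}$ with symbol bounded by $\hat V$ and a lower-order contribution. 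To convert this quadratic form (integrated against $d\rho_N$) into something controlled by $E_N$, I would invoke the convexity inequality \eqref{ineg} in the form
\[
\int\rho_N\,\Phi\;\le\;\int\rho_N\log\frac{\rho_N}{\bar\rho_N G_N/G_{\bar\rho_N}}+\log\int\frac{\bar\rho_N G_N}{G_{\bar\rho_N}}\,e^{\Phi},
\]
choosing the test function $\Phi$ proportional to the above quadratic expression, and use a large deviation estimate on $\mu_N-\bar\rho$ under the tilted measure $\bar\rho_N G_N/G_{\bar\rho_N}$ to absorb the log-Laplace into $E_N$ modulo an algebraic error $C/N^\theta$. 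This yields
\[
I_N\le \int_0^t E_N(s)\,ds+C/N^\theta,
\]
which is step (I). The large deviation step is where hypothesis \eqref{hyp04} and the regularization Lemma \ref{lemmatruncate} enter, in order to discretize and bound the exponential moments despite the singularity at the origin.

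For step (II), the lower bound on $E_N$, I would exploit the Fourier positivity \eqref{hyp03}: since $\mathcal K_N$ equals (up to $1/\sigma$) the expectation of $\tfrac12\int V\,d(\mu_N-\bar\rho)^{\otimes 2}$ restricted to $x\neq y$, Plancherel gives
\[
\int_{x\neq y} V\,d(\mu_N-\bar\rho)^{\otimes 2}=\int\hat V(\xi)\,|\widehat{\mu_N-\bar\rho}(\xi)|^2\,d\xi-\frac{V(0)_{\mathrm{reg}}}{N},
\]
where the formal diagonal term is handled by the truncation of Lemma \ref{lemmatruncate} applied to $V$ at scale $N^{-1/d}$: this replaces $V(0)$ by a finite quantity whose contribution is of order $N^{-\theta}$ thanks to \eqref{hyp001}--\eqref{hyp04}. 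The first term on the right is nonnegative by \eqref{hyp03}, giving $\mathcal K_N\ge -C/N^\theta$, and combined with the nonnegativity of $\mathcal H_N$ this produces both the lower bound $E_N\ge \mathcal H_N/C-C/N^\theta$ and the pointwise bound $\mathcal H_N+|\mathcal K_N|\le E_N+2C/N^\theta$.

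With (I) and (II) in hand, Gronwall applied to $E_N(t)\le E_N(0)+C/N^\theta+C_{\bar\rho}\|K\|\int_0^t E_N(s)\,ds$ yields
\[
E_N(t)\le e^{C_{\bar\rho}\|K\|\,t}\!\left(E_N(0)+C/N^\theta\right),
\]
and adding the almost-positivity of $\mathcal K_N$ on both sides gives the announced estimate on $\mathcal H_N(t)+|\mathcal K_N(t)|$. The Csisz\'ar--Kullback--Pinsker inequality applied to each marginal $\rho_{N,k}$, together with the subadditivity of relative entropy recalled in the introduction, upgrades this to the $L^1$ bound with exponent $\theta/2$ (absorbed into $\theta$). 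The main obstacle I expect is the coupling between the Fourier-side manipulation of $I_N$ and the large deviation/log-Laplace bound: making sure that the error produced by \eqref{hyp05} and by the regularization of $V$ near the diagonal is genuinely $O(N^{-\theta})$ rather than merely $o(1)$, for a $\theta>0$ independent of $\bar\rho$, is the delicate quantitative point of the proof.
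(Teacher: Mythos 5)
Your overall skeleton — start from \eqref{INEG}, drop the Fisher information, reduce to the two structural estimates (I) and (II), close by Gronwall, then Csisz\'ar--Kullback--Pinsker — matches the paper's plan. The trouble is in your step (II), where the route you take is actually the one the paper reserves for the \emph{vanishing-viscosity} case and it cannot deliver the $N^{-\theta}$ rate that Theorem~\ref{Convgene} requires.

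You propose to bound $\mathcal K_N$ from below by Plancherel plus the Fourier positivity \eqref{hyp03}, handling the diagonal by replacing $V$ with the smoothed $V_\eps$ of Lemma~\ref{lemmatruncate} and claiming that the resulting error is $O(N^{-\theta})$. But the construction in Lemma~\ref{lemmatruncate} is non-quantitative: the scales $\delta_n$ are produced by a contradiction argument with no explicit dependence on $n$ (the paper explicitly notes that an explicit bound on how $\delta_{n+1}$ compares to $\delta_n$ would need extra assumptions on $V$). Consequently $\|V_\eps\|_{L^\infty}$ is not controlled polynomially in $\eps$, and the optimization in $\eps,\delta$ in Lemma~\ref{KNgeq0} only yields $\sigma_N\,\mathcal K_N \ge -\eta(N)$ with $\eta(N)=o(1)$. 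That is exactly why Theorem~\ref{Conv} (vanishing $\sigma$) is stated with an unspecified rate $\eta(N)$ and a Wasserstein conclusion, whereas Theorem~\ref{Convgene} ($\sigma$ fixed) claims $N^{-\theta}$ and an $L^1$ conclusion.

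For the fixed-$\sigma$ case the paper instead \emph{truncates the interaction}: it introduces $\mathcal K_N^\eta$ built with $V(x)\chi(|x|/\eta)$ and $E_N^\eta=\mathcal H_N+\mathcal K_N^\eta$, and proves $E_N^\eta\ge -C/N^\theta$ (Proposition~\ref{boundEN}) through the convexity inequality \eqref{ineg} and the quantitative large-deviation estimate of Proposition~\ref{LargeDeviation}, where the $N^{-\theta}$ rate comes from the explicit error terms in Theorem~\ref{variantlargedeviation} together with Lemma~\ref{I(F)log} forcing $I(F_\eps)=0$ for $\|W\|_{L^1}$ small. This also creates a second task that your proposal does not address: one still has to propagate the long-range complement $\mathcal K_N(G_N^W\,|\,G_{\bar\rho_N}^W)$ with $W=V(1-\chi(\cdot/\eta))$, which the paper does by differentiating it in time and using that $V$ is $W^{2,\infty}$ away from the origin, giving $\frac{d}{dt}\mathcal K_N^W\le C\,\mathcal H_N+C/N$. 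Without both the truncated lower bound \emph{and} the separate propagation of the long-range piece, the Gronwall closure you write down is not justified with the claimed algebraic rate. Your step (I), by contrast, is essentially aligned with Corollary~\ref{controlINnonvanishing} (Fourier computation via Lemma~\ref{lemfourier2}, then convexity plus large deviations to absorb the $|x|^{-\alpha}$ remainder), so the gap is localized in (II) and in the missing long-range argument.
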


\smallskip

\begin{Remark} In the case $\sigma>0$ fixed, it is possible to enlarge the class of kernels for instance assuming only $V\ge -C$ with
$C> 0$ and choosing $\alpha=0$ in \eqref{hyp05}.  To allow $\alpha=0$ requires a novel large deviation inequality, similar in spirit to Prop. \ref{LargeDeviation} but for singular attractive potentials.  More precisely, we can use the following proposition for which the proof is more complex and we refer to our upcoming article \cite{BrJaWa1} for it.
  There exists $\eta_0>0$, $\theta>0$ s.t. if $G(x)\leq C\,\log\frac{1}{|x|}+C$ with $C>0$ and $\eta\leq \eta_0$ then
  \[
\int_{\Pi^{d\,N}}\int_{\{x\neq y\}\cap\{|x-y|\leq\eta\}} G(x-y)\,(d\mu_N-d\bar\rho)^{\otimes2}\,d\rho_N\leq C\,\mathcal{H}_N+\frac{C}{N^\theta}.
  \]
 Note that such control is also central in the proof for the attractive Patlak-Keller-Segel kernel, see  \cite{BrJaWa} for a sketch.
\end{Remark}

\medskip

\noindent {II)  \bf An Attractive case.} This part concerns the first
quantitative estimate related to particle approximation of the Patlak-Keller-Segel system. Namely we get
the same conclusion than in Theorem \ref{Convgene}   for 
$$V=\lambda \log |x|+ perturbation$$
if $0\le \lambda<2d \sigma$  with the perturbation being regular kernel to get $V$ periodic.
We refer the readers to \cite{BrJaWa} for the explanation  of the steps in the Proof. 
Some comments  are given at the end of the present paper.

\section{An important regularization lemma}
In our approach an important Lemma will be the construction of an appropriate regularized kernel using hypothesis \eqref{hyp01}--\eqref{hyp04}.
More precisely we prove the following Lemma 
\begin{Lemma} Let $V$ satisfying \eqref{hyp01}--\eqref{hyp02}.
Then  there exists a smooth approximation $V_\varepsilon$ of $V$  and a function of $\eta(\varepsilon)$
with $\eta(\varepsilon) \to 0$ as $\varepsilon \to 0$ such that
\[
\begin{split}
& \hat V_\varepsilon \ge 0,  \\
&\|V_\varepsilon - V \|_{L^1(\Pi^d)} \le \eta(\varepsilon),\quad 
\|\mathbb{I}_{|x|\geq \delta}\,(V-V_\eps)\|_{L^1}\leq C\,\frac{\eps}{\delta^k},\quad \|\mathbb{I}_{|x|\geq \delta}\,(\nabla V-\nabla V_\eps)\|_{L^1}\leq C\,\frac{\eps}{\delta^{k'}},\\
& V_\varepsilon(x) \le V(x) + \varepsilon \hbox{ for all } x.
\end{split}
\]
\label{lemmatruncate}
\end{Lemma}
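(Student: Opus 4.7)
The plan is to take $V_\varepsilon := V\star\phi_\varepsilon$ for a positive-definite mollifier $\phi_\varepsilon$. Start from a smooth, real-valued, even, compactly supported function $\psi$ on $\R^d$ with $\int\psi=1$, and set $\phi = \psi\star\psi$; then $\phi$ is smooth, even, non-negative, with $\int\phi=1$ and $\hat\phi = |\hat\psi|^2\ge 0$. Periodize to $\Pi^d$ and rescale by $\phi_\varepsilon(x) = \varepsilon^{-d}\phi(x/\varepsilon)$. The first property, $\hat V_\varepsilon(\xi) = \hat V(\xi)\,\hat\phi(\varepsilon\xi)\ge 0$, then follows from \eqref{hyp03}. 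The $L^1$-convergence $\|V_\varepsilon-V\|_{L^1(\Pi^d)}\to 0$ is standard approximate-identity convergence, using $V\in L^p\subset L^1$ on the torus; this defines $\eta(\varepsilon)$.

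For the weighted bounds on $\mathbb{I}_{|x|\ge\delta}(V-V_\varepsilon)$ and on its gradient analogue, I would use a Taylor-type expansion combined with \eqref{hyp001}. For $|x|\ge\delta\ge 2\varepsilon$ and any $y$ in $\supp\phi_\varepsilon$, the segment $[x,x-y]$ stays inside $\{|z|\ge |x|/2\}$, where $|\nabla V|\le C/|x|^k$. A first-order expansion yields $|V_\varepsilon(x)-V(x)|\le C\,\varepsilon/|x|^k$, and integration over $\{|x|\ge\delta\}\subset\Pi^d$ produces the bound $C\,\varepsilon/\delta^k$. Differentiating under the convolution and applying the same argument with $|\nabla^2 V|\le C/|x|^{k'}$ in place of $|\nabla V|$ gives the analogue for $\nabla V-\nabla V_\varepsilon$.

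The main obstacle is the final pointwise upper bound $V_\varepsilon(x)\le V(x)+\varepsilon$. A naive positive-definite mollification can exceed $V$ by an amount comparable to $V(x)$ itself near the singularity (for instance when $V$ is subharmonic there), so this inequality cannot be extracted from the convolution structure alone. The crucial extra ingredient is \eqref{hyp02}, which reads $|\nabla\log V|\le C/|x|$ and, integrated along segments of length $\le|x|/2$, gives the local quasi-invariance $V(x-y)\le C\,V(x)$. I would then split into two regimes. In the far regime $|x|\ge\varepsilon^{1/k'}$ the evenness of $\phi$ kills the first-order term in the Taylor expansion, so \eqref{hyp001} leaves $|V_\varepsilon(x)-V(x)|\le C\,\varepsilon^2/|x|^{k'}\le C\,\varepsilon$. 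In the near regime $|x|\le\varepsilon^{1/k'}$ the quasi-invariance plus the fact that $V(x)$ is already much larger than $\varepsilon$ absorbs the overshoot, provided the construction is locally adjusted. The hardest point is performing this near-field adjustment while keeping $\hat V_\varepsilon\ge 0$; the way to do this is to work in Fourier space and realize the adjustment as multiplication of $\hat V$ by an additional non-negative cutoff, so the Bochner-type non-negativity is preserved throughout the construction.
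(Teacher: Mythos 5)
Your handling of the Fourier positivity, the $L^1$ approximation, and the weighted tail bounds is correct and matches the paper's use of the convolution kernel $K^1_\delta$. The genuine gap is in the final pointwise bound $V_\eps\le V+\eps$, exactly where you yourself flag the difficulty. With a single-scale mollification $V_\eps=K_\eps\star V$ there are not two regimes but three: (i) a far regime $|x|\gtrsim\eps^{1/2k}$ where Taylor expansion with \eqref{hyp001} gives $K_\eps\star V(x)\le V(x)+C\eps^{1/2}$; (ii) a very near regime $|x|\le f(\eps)$ where the blow-up of $V$ at the origin forces $K_\eps\star V(x)\le V(x)$; and (iii) an intermediate annulus $f(\eps)\le|x|\lesssim\eps^{1/2k}$ where the only available control is the doubling bound from \eqref{hyp02}, namely $K_\eps\star V(x)\le C\,V(x)$ with a constant $C>1$ that you cannot make close to $1$. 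In this intermediate annulus the overshoot $(C-1)V(x)$ is large, not $O(\eps)$, so quasi-invariance does not "absorb" it. Dividing the whole kernel by $C$ (to enforce $V_\eps\le V$) would destroy $L^1$ convergence, and the Fourier-space multiplication you propose is just another convolution and cannot produce the needed localized near-field correction.

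The paper resolves this by a multi-scale averaging construction rather than a single mollification: it first selects a sequence $\delta_n\to0$ with the doubling-type property $\int_{B(0,\delta_n)}V\le C\int_{\delta_n/2\le|y|\le\delta_n}V$, which upgrades the doubling bound to $K_{\delta_n}\star V(x)\le C\,V(x)$ for all $x$ (not only $|x|\ge2\delta_n$). It then defines $W_\eps=\frac{1}{M}\sum_{i=1}^M K_{\delta_{n_i}}\star V$ with $M\sim 1/\eps$ and the scales $\delta_{n_i}$ spread out geometrically, $\delta_{n_{i+1}}\le\min(f(\delta_{n_i}),\delta_{n_i}^{2k})$. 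The key observation is that for any fixed $x$ at most two of the $M$ scales fall in the problematic intermediate annulus; all others satisfy either $K_{\delta_{n_j}}\star V(x)\le V(x)+\eps$ or $K_{\delta_{n_j}}\star V(x)\le V(x)$. Averaging then yields $W_\eps(x)\le(1+2C/M)V(x)+\eps$, and the normalization $V_\eps=W_\eps/(1+2C\eps)$ gives the desired $V_\eps\le V+\eps$ while preserving $\hat V_\eps\ge0$ and the other estimates. This averaging-over-scales idea is the ingredient your proposal is missing.
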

This step asks for appropriate regularization which in some sense depend on the total number of particles $N$: It uses the pointwise properties of the kernels \eqref{hyp001}--\eqref{hyp04} which includes a doubling variable property.
\begin{proof}

\noindent {\it Some preliminary controls.}
 Consider a smooth kernel $K^1$ with compact support in $B(0,1)$ and s.t. $\hat K^1\geq 0$.
Observe that for $|x|\geq 2\,\delta$ and since $|\nabla V(y)|\leq \frac{C}{|x|^{k}}$ for $|y|\geq |x|/2$,
  \[
|K^1_\delta\star V(x)-V(x)|\leq \int_{|z|\leq 1} K^1(z)\,|V(x-\delta\,z)-V(x)|\,dz\leq 
C\,\frac{\delta}{|x|^{k}}.
\]
Therefore
\begin{equation}
K^1_\delta\star V(x)\leq V(x)+C\,\delta^{1/2},\quad\forall\;|x|\geq \delta^{1/2k}.\label{|x|geqdelta}
\end{equation}
On the other hand, we also have that $K^1_\delta\star V(x)\leq C\,\delta^{-d}\,\|V\|_{L^1}$. Since $V(x)\to \infty$ as $|x|\to 0$, we also have that for some increasing function $f(\delta)$
\begin{equation}
K^1_\delta\star V(x)\leq V(x),\quad\forall\;|x|\leq f(\delta).\label{|x|leqdelta}
\end{equation}
As $k$ is chosen such that $k>1/2$ then $\delta^{1/2k}\ge 2\delta$, so we need to be more precise where $f(\delta)\leq |x|\leq \delta^{1/2k}$.

\smallskip
\noindent {\it Construction of an appropriate regularized kernel $W_\eps$.}

\noindent {\it Case $|x|\ge 2\delta$.} First of all we notice that by the doubling property, we have directly if $|x|\geq 2\,\delta$ that $|x|/2\leq |x-\delta\,z|\leq 2|x|$ for $|z|\leq 1$ and thus
\begin{equation}
K^1_\delta\star V(x)\leq C\,V(x),\quad\forall\;|x|\geq 2\,\delta.\label{|x|sim2delta}
  \end{equation}

\smallskip

\noindent {\it Case $|x|\le  2\delta$.}  For $|x|\leq 2\,\delta$, the doubling property on its own only gives that
\begin{equation}
K^1_\delta\star V(x)\leq C\,K^1_\delta\star V(0)\leq C\,\delta^{-d}\,\int_{B(0,\delta)} V(y)\,dy.\label{K*V(0)}
\end{equation}
We now define a sequence $\delta_n\to 0$ s.t.
\begin{equation}
\int_{B(0,\delta_n)} V(y)\,dy\leq C\,\int_{\delta_n/2\leq |y|\leq \delta_n} V(y)\,dy.
\label{defdelta_n}
\end{equation}
The existence of such a sequence is straightforward to show by contradiction, as otherwise, we would have for some $\bar\delta$ and all $\delta\leq \bar\delta$ that
\[
\int_{B(0,\delta/2)} V(y)\,dy\geq (C-1)\,\int_{\delta/2\leq |y|\leq \delta} V(y)\,dy.
\]
By induction, this would imply that
\begin{equation}\label{contrad}
\int_{B(0,\bar \delta/2^k)} V(y)\,dy\geq (1-1/C)^{k-1} (C-1)\int_{\bar \delta/2\leq |y|\leq \bar \delta} V(y)\,dy.
\end{equation}
Using the $L^p$ bound on $V$, we have
\[
\int_{B(0,\bar \delta/2^k)} V(y)\,dy 
\le C' \, 2^{-kp/(p-1)}
\]
this provides a contradiction with \eqref{contrad}  if $C-1$ is too large. Note by the way that if we assume some explicit rate on $V$ then we can have explicit bound on how large $\delta_n$ can be w.r.t. $\delta_{n+1}$. 

If $\delta=\delta_n$, then \eqref{defdelta_n} and \eqref{K*V(0)} together implies that
\[
K^1_\delta\star V(x)\leq C\, \delta_n^{-d} \int_{\delta_n/2\leq |y|\leq \delta_n} V(y)\,dy.
\]
This is where we use \eqref{hyp02} which implies that if $|x|\leq 2\,\delta_n$ and $\delta_n/2\leq |y|\leq \delta_n$ then $V(y)\leq C\,V(x)$. Hence eventually for $\delta=\delta_n$, we obtain the counterpart to \eqref{|x|sim2delta} and find
\begin{equation}
K_{\delta_n}\star V(x)\leq C\, V(x), \qquad\forall x.\label{roughgeneral}
\end{equation}

\bigskip

\noindent Now for every $\eps$, we are going to choose $M$ of the parameters $\delta_n$, with $M$ the integer part of $\eps$ large, and define first
\[
W_\eps=\frac{1}{M}\sum_{i=1}^M K_{\delta_{n_i}}\star V.
\]
We start by taking $\delta_{n_1}\leq\eps^2/C$ and we then define the $n_i$ recursively s.t.
\[
\delta_{n_{i+1}}\leq\min(f(\delta_{n_i}),\;\delta_{n_i}^{2k}).
\]
We of course have automatically that $\hat W_\eps\geq 0$. Moreover since $\max \delta_{n_i}\to 0$ as $\eps\to 0$, the standard approximation by convolution shows that $\|V-W_\eps\|_{L^1}\to 0$. By using \eqref{hyp001}, we also directly have that
\[
\|\mathbb{I}_{|x|\geq \delta}\,(V-W_\eps)\|_{L^1}\leq C\,\frac{\max\delta_{n_i}}{\delta^k}\leq C\,\frac{\eps}{\delta^k},
\]
and similarly
\[
\|\mathbb{I}_{|x|\geq \delta}\,(\nabla V-\nabla W_\eps)\|_{L^1}\leq C\,\frac{\eps}{\delta^{k'}}.
\]

\smallskip

It only remains to compare $W_\eps$ and $V$. For this consider any $x$, if $|x|\geq \delta_{n_1}^{1/2k}$ then \eqref{|x|geqdelta} directly implies that $W_\eps(x)\leq V(x)+\eps$. If $|x|\leq f(\delta_{n_M})$ then \eqref{|x|leqdelta} also directly implies that $W_\eps(x)\leq V(x)$.

This only leaves the case where $|x|$ is somewhere between $\delta_{n_M}$ and $\delta_{n_1}$. In that case, there exists $i$ s.t. $\delta_{n_{i+1}}\leq |x|\leq \delta_{n_i}$. By the definition of the $\delta_{n_j}$, one has that $|x|\leq f(\delta_{n_j})$ if $j<i$ and $|x|\geq \delta_{n_j}^{1/2k}$ if $j>i+1$. Using again \eqref{|x|geqdelta} and  \eqref{|x|leqdelta}, we then have
\[
K_{\delta_{n_j}}\star V(x)\leq V(x) + \eps \quad\mbox{if}\ j>i+1,\qquad K_{\delta_{n_j}}\star V(x)\leq V(x) \quad\mbox{if}\ j<i.
\]
Using \eqref{roughgeneral} for $j=i$ and $j=i+1$, we get
\[
W_\eps(x)\leq (1+2\,C/M)\,V(x)+\eps=(1+2\,C\,\eps)\,V(x)+\eps.
\]

\smallskip

\noindent {\it Definition of $V_\epsilon$ and conclusion.}
This leads to the final definition $V_\eps=W_\eps/(1+2\,C\,\eps)$ which indeed satisfies $V_\eps\leq V+\eps$, $\hat V_\eps\geq 0$ and still $\|V_\eps-V\|_{L^1}\to 0$ together with the other convergences since obviously $\|V_\eps-W_\eps\|_{L^1}\to 0$.
  \end{proof}

\bigskip

\section{Control of terms for Repulsive Kernels with vanishing viscosity $\sigma_N\to 0$}
   As a corollary of Lemma \ref{lemmatruncate}, we obtain a straightforward control on the contribution to the potential energy from close particles
\begin{Lemma}
  Under the assumptions of Lemma \ref{lemmatruncate}, there exists a function $\eta(N)$ with $\eta(N)\to 0$ as $N\to \infty$, one has that
  \[
\sigma_N\,\mathcal{K}_N(t)\geq -\eta(N).
\]
Furthermore there exists $\eta(\delta)$ with $\eta(\delta)\to 0$ as $\eta\to 0$, such that for any $\delta$
\[
\mathbb{E}\bigl(\frac{1}{N^2}\,\sum_{i\neq j} V(x_i-x_j)\,\mathbb{I}_{|x_i-x_j|\leq \delta}\bigr)\leq 4\,\sigma_N\,\mathcal{K}_N(t)+\eta(N)+\eta(\delta).
\]\label{KNgeq0}
\end{Lemma}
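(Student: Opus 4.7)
The plan is to express $\sigma_N\,\mathcal{K}_N$ as the expectation of a Serfaty-type modulated energy and then exploit the Fourier-positive regularization $V_\eps$ from Lemma~\ref{lemmatruncate}. A direct computation from the definitions \eqref{GN}--\eqref{G} yields, after the purely $\bar\rho$-dependent terms cancel,
\[
\sigma_N\,\mathcal{K}_N(t)\;=\;\frac{1}{2}\int_{\Pi^{dN}}\rho_N(t,X^N)\left[\int_{\Pi^{2d}\cap\{x\neq y\}}V(x-y)\,(d\mu_N-d\bar\rho)^{\otimes 2}\right]dX^N,
\]
so that $\sigma_N\,\mathcal{K}_N$ is the expectation of Serfaty's modulated energy for $V$.

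For the first claim, write $V=V_\eps+(V-V_\eps)$ using Lemma~\ref{lemmatruncate}. Plancherel together with $\hat V_\eps\geq 0$ gives
\[
\int_{\Pi^{2d}}V_\eps(x-y)\,(d\mu_N-d\bar\rho)(x)(d\mu_N-d\bar\rho)(y)\;\geq\;0,
\]
so restricting the integration to $\{x\neq y\}$ costs at most the diagonal contribution $V_\eps(0)/N$. The perturbation is treated via the pointwise lower bound $V-V_\eps\geq-\eps$, which controls the diagonal-free double sum $\tfrac{1}{N^2}\sum_{i\neq j}(V-V_\eps)(x_i-x_j)$ from below, and via $\|V-V_\eps\|_{L^1}\leq\eta(\eps)$ combined with $\bar\rho\in L^\infty$, which controls the terms linear and quadratic in $\bar\rho$ through Young's inequality. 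Choosing $\eps=\eps(N)\to 0$ slowly enough that $V_\eps(0)/N\to 0$ then yields $\sigma_N\,\mathcal{K}_N\geq-\eta(N)$.

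For the second claim, denote by $A$ the left-hand side of the inequality. Expanding $d\mu_N^{\otimes 2}=(d\mu_N-d\bar\rho)^{\otimes 2}+2(d\mu_N-d\bar\rho)\otimes d\bar\rho+d\bar\rho^{\otimes 2}$ in the integral defining $A$, the contributions involving a copy of $d\bar\rho$ are bounded by $\|(V\mathbb{I}_{|\cdot|\leq\delta})\star\bar\rho\|_{L^\infty}\leq\|V\mathbb{I}_{|\cdot|\leq\delta}\|_{L^1}\|\bar\rho\|_{L^\infty}=\eta(\delta)$, since $V\in L^p$ with $p>1$. Thus $A=\mathbb{E}\bigl[\int_{x\neq y} V\mathbb{I}_{|x-y|\leq\delta}(d\mu_N-d\bar\rho)^{\otimes 2}\bigr]+O(\eta(\delta))$, which equals $2\sigma_N\,\mathcal{K}_N$ minus the ``far'' contribution $\mathbb{E}\bigl[\int_{x\neq y} V\mathbb{I}_{|x-y|>\delta}(d\mu_N-d\bar\rho)^{\otimes 2}\bigr]$.

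The main obstacle is a suitable lower bound on this far contribution, since $V\mathbb{I}_{|\cdot|>\delta}$ lacks Fourier positivity. The plan is to further decompose $V\mathbb{I}_{|\cdot|>\delta}=V_\eps\mathbb{I}_{|\cdot|>\delta}+(V-V_\eps)\mathbb{I}_{|\cdot|>\delta}$: the refined estimate $\|\mathbb{I}_{|\cdot|\geq\delta}(V-V_\eps)\|_{L^1}\leq C\eps/\delta^k$ from Lemma~\ref{lemmatruncate} controls the residual (by tying $\eps$ to $\delta$, e.g.\ $\eps\sim\delta^{2k}$), while for the Fourier-positive piece I write $V_\eps\mathbb{I}_{|\cdot|>\delta}=V_\eps-V_\eps\mathbb{I}_{|\cdot|\leq\delta}$ and apply the Cauchy--Schwarz / AM--GM bound
\[
\int V_\eps(x-y)\,d\mu_N(x)\,d\mu_N(y)\;\leq\;2\int V_\eps\,(d\mu_N-d\bar\rho)^{\otimes 2}+2\int V_\eps\,d\bar\rho^{\otimes 2},
\]
which is valid because $\hat V_\eps\geq 0$. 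Using $V_\eps\leq V+\eps$ then allows one to reabsorb the unwanted term $V_\eps\mathbb{I}_{|\cdot|\leq\delta}$ back into $A$, producing the factor $4$ in the final bound. This reabsorption step is the technical heart of the proof and requires simultaneous use of both the pointwise bound and the refined $L^1$ estimates of Lemma~\ref{lemmatruncate}.
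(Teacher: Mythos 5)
Your proof of the first bound $\sigma_N\,\mathcal{K}_N\geq-\eta(N)$ is correct and essentially equivalent to the paper's. You decompose $V=V_\eps+(V-V_\eps)$ and treat each piece globally, whereas the paper first splits the integration domain at $\{|x-y|\lessgtr\delta\}$ and only then switches to $V_\eps$ on each region; the ingredients (Fourier positivity of $V_\eps$, the pointwise bound $V_\eps\leq V+\eps$ to control the off-diagonal $\mu_N^{\otimes 2}$ part, and an $L^1$ bound to handle the $\bar\rho$-involving terms) are the same. Your version even avoids introducing the auxiliary parameter $\delta$ at this stage, at the cost of only using the non-quantitative bound $\|V-V_\eps\|_{L^1}\to 0$, which suffices here.

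The second bound is where your argument has a real gap. After isolating the near-diagonal term you correctly arrive, exactly as in the paper, at the need to bound from above the far contribution $-\int_{|x-y|>\delta}V_\eps(x-y)(d\mu_N-d\bar\rho)^{\otimes 2}$, which you split into $-\int V_\eps(d\mu_N-d\bar\rho)^{\otimes 2}$ (nonpositive by Fourier positivity) plus the \emph{unwanted} term $\int V_\eps\,\mathbb{I}_{|\cdot|\leq\delta}(d\mu_N-d\bar\rho)^{\otimes 2}$. Your strategy of reabsorbing this unwanted term into $A$ via $V_\eps\leq V+\eps$ produces exactly $A\leq 2\sigma_N\mathcal{K}_N+A+\text{errors}$, which is vacuous: there is no factor strictly less than $1$ in front of the reabsorbed $A$. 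The AM--GM inequality you invoke, $\int V_\eps\,d\mu_N^{\otimes 2}\leq 2\int V_\eps(d\mu_N-d\bar\rho)^{\otimes 2}+2\int V_\eps\,d\bar\rho^{\otimes 2}$, does not rescue the argument because $\int V_\eps\,d\bar\rho^{\otimes 2}$ is an $O(1)$ quantity, not something that vanishes as $N\to\infty$ or $\delta\to 0$. The paper instead exploits a genuine cancellation in Fourier: since on the torus $\|\mathbb{I}_{|x|\geq\delta}\|_{L^1}<1$, the Fourier transform of $V_\eps\,\mathbb{I}_{|\cdot|\geq\delta}$ is dominated by $\hat V_\eps$, which yields directly
\[
-\int_{|x-y|\geq\delta}V_\eps(x-y)\,(d\mu_N-d\bar\rho)^{\otimes 2}\ \leq\ \int V_\eps(x-y)\,(d\mu_N-d\bar\rho)^{\otimes 2},
\]
a one-sided bound with \emph{no} reabsorption into $A$. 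One then converts the right-hand side back to $\int_{x\neq y}V\,(d\mu_N-d\bar\rho)^{\otimes 2}$ via the already-established inequality \eqref{intermed1}, and this is precisely where the factor $2$ (hence the $4$ in the statement) comes from. Your write-up flags this step as "the technical heart of the proof" but the mechanism you propose for it does not close; the missing idea is the Fourier domination of $\widehat{V_\eps\,\mathbb{I}_{|\cdot|\geq\delta}}$ by $\hat V_\eps$, which is what replaces the lost positivity after truncating $V_\eps$ at distance $\delta$.
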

\begin{proof}
We  start by noticing that 
  \[
\sigma_N\,\mathcal{K}_N(t)=\frac{1}{2}\,\int_{|x-y|\geq \delta} V(x-y)\,(\mu_N-\bar\rho)^{\otimes 2}+\frac{1}{2}\,\int_{|x-y|< \delta, \;x\neq y} V(x-y)\,(\mu_N-\bar\rho)^{\otimes 2}.
\]
By using the regularity of $\bar\rho$ and the $L^p$ integrability of $V$, we may bound from below the second term in the right-hand side by
\[
\int_{|x-y|< \delta, \;x\neq y} V(x-y)\,(\mu_N-\bar\rho)^{\otimes 2}\geq \frac{1}{N^2}\,\sum_{i\neq j} V(x_i-x_j)\,\mathbb{I}_{|x_i-x_j|\leq \delta}-C\,\delta^\alpha,
\]
for some positive exponent $\alpha$. Using Lemma \ref{lemmatruncate} and more precisely the inequality $V_\eps\leq V+\eps$, we obtain that
\[
\int_{|x-y|< \delta, \;x\neq y} V(x-y)\,(\mu_N-\bar\rho)^{\otimes 2}\geq \frac{1}{N^2}\,\sum_{i\neq j} V_\eps(x_i-x_j)\,\mathbb{I}_{|x_i-x_j|\leq \delta}-C\,\delta^\alpha-\eps.
\]
Observe now that by the second point in  Lemma \ref{lemmatruncate}, we have that
\begin{equation}
\int_{|x-y|\geq \delta} V(x-y)\,(\mu_N-\bar\rho)^{\otimes 2}\geq \int_{|x-y|\geq \delta} V_\eps(x-y)\,(\mu_N-\bar\rho)^{\otimes 2}-C\,\frac{\eps}{\delta^k}.\label{intermed0}
\end{equation}
Therefore by summing, we obtain that
\[
\int_{x\neq y} V(x-y)\,(\mu_N-\bar\rho)^{\otimes 2}\geq \int_{x\neq y} V_\eps(x-y)\,(\mu_N-\bar\rho)^{\otimes 2}-C\,\delta^\alpha-\eps- C\,\frac{\eps}{\delta^k}.
\]
We may simply add the diagonal to find
\begin{equation}
\int_{x\neq y} V(x-y)\,(\mu_N-\bar\rho)^{\otimes 2}\geq \int V_\eps(x-y)\,(\mu_N-\bar\rho)^{\otimes 2}-\frac{C\,\|V_\eps\|_{L^\infty}}{N}-C\,\delta^\alpha-\eps- C\,\frac{\eps}{\delta^k}.\label{intermed1}
\end{equation}
Since $\hat V_\eps\geq 0$, this yields
\[
\int_{x\neq y} V(x-y)\,(\mu_N-\bar\rho)^{\otimes 2}\geq -\frac{C\,\|V_\eps\|_{L^\infty}}{N}-C\,\delta^\alpha-\eps- C\,\frac{\eps}{\delta^k},
\]
and conclude the first point by optimizing in $\eps$ and $\delta$.

\bigskip

\noindent To prove the second point we first remind that 
\[
\frac{1}{N^2}\,\sum_{i\neq j} V(x_i-x_j)\,\mathbb{I}_{|x_i-x_j|\leq \delta}\leq \int_{|x-y|< \delta, \;x\neq y} V(x-y)\,(\mu_N-\bar\rho)^{\otimes 2}+C\,\delta^\alpha, 
\]
and thus
\[\begin{split}
&\frac{1}{N^2}\,\sum_{i\neq j} V(x_i-x_j)\,\mathbb{I}_{|x_i-x_j|\leq \delta}\leq \int_{x\neq y} V(x-y)\,(\mu_N-\bar\rho)^{\otimes 2}+C\,\delta^\alpha\\
&\qquad -\int_{|x-y|\geq \delta, \;x\neq y} V(x-y)\,(\mu_N-\bar\rho)^{\otimes 2}.
\end{split}
\]
Using again \eqref{intermed0}, we get
\[\begin{split}
&\frac{1}{N^2}\,\sum_{i\neq j} V(x_i-x_j)\,\mathbb{I}_{|x_i-x_j|\leq \delta}\leq \int_{x\neq y} V(x-y)\,(\mu_N-\bar\rho)^{\otimes 2}+C\,\delta^\alpha\\
&\qquad -\int_{|x-y|\geq \delta, \;x\neq y} V_\eps(x-y)\,(\mu_N-\bar\rho)^{\otimes 2}+C\,\frac{\eps}{\delta^k}.
\end{split}
\]
Remark that since we are on the torus, $\mathbb{I}_{|x|\geq\delta}$ has $L^1$ norm less than $1$. Therefore the Fourier transform of $V_\eps\,\mathbb{I}_{|x|\geq\delta}$ is dominated by $\hat V_\eps$ and
\[
-\int_{|x-y|\geq \delta, \;x\neq y} V_\eps(x-y)\,(\mu_N-\bar\rho)^{\otimes 2}\leq -\int V_\eps(x-y)\,(\mu_N-\bar\rho)^{\otimes 2}.
\]
Appealing to \eqref{intermed1}, we hence finally get
\[\begin{split}
&\frac{1}{N^2}\,\sum_{i\neq j} V(x_i-x_j)\,\mathbb{I}_{|x_i-x_j|\leq \delta}\leq 2\,\int_{x\neq y} V(x-y)\,(\mu_N-\bar\rho)^{\otimes 2}\\
&\qquad+C\,\delta^\alpha+C\,\frac{\eps}{\delta^k}+\eps +C\,\frac{\|V_\eps\|_{L^\infty}}{N},
\end{split}
\]
which concludes the second point, again by optimizing in $\eps$.
  \end{proof}

\bigskip
 We need to control terms from above like
 \[
 I_N = - \int_{\Pi^{dN}} d\rho_N \int_{\Pi^{2d}\cap \{x\not=y\}}
     \nabla V(x-y) \cdot (\psi(x)-\psi(y)) (d\mu_N-d \bar\rho)^{\otimes 2},
\]
for $\psi$ regular enough in terms of the potential energy. We use Fourier transform for the regularized kernel that not use explicit formula of the kernel as in \cite{Se}, \cite{Du}.  This procedure 
allows  to treat more general kernels because it is not based on the reformulation of the energy in terms of potential or extension representation (for the fractional laplacian) by Caffarelli-Silvestre
as in \cite{Se}, \cite{Se1}, \cite{Du}.  More precisely, we prove that
 \begin{Lemma} 
Let $\psi \in W^{1,\infty}(\Pi^d)$ and if $V$ satisfies \eqref{hyp01}--\eqref{hyp03},
 then for any measure $\nu$, we have that
 \[
-  \int  \nabla V(x-y) (\psi(x)-\psi(y)) \nu^{\otimes 2} \le
     C\, \int |\hat \nu(\xi)|^2\, \hat V(\xi) \, d\xi.
 \]\label{lemfourier}
 \end{Lemma}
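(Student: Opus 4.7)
By the evenness of $V$, the gradient $\nabla V$ is odd, so combined with the antisymmetry of $\psi(x)-\psi(y)$ under the swap $x\leftrightarrow y$ the integrand is symmetric in $(x,y)$; averaging over this swap I would rewrite
\[
-\int \nabla V(x-y)\cdot(\psi(x)-\psi(y))\,\nu^{\otimes 2}=-2\int \psi\cdot(\nabla V\star\nu)\,d\nu.
\]
Next I would pass to Fourier via Plancherel, with $\widehat{\nabla V}(\xi)=i\xi\hat V(\xi)$ and $\widehat{\psi\,d\nu}=\hat\psi\star\hat\nu$, turning the quantity into the double sum
\[
-2\sum_{\xi,\eta} i\,\hat\psi(\eta)\cdot\xi\hat V(\xi)\,\hat\nu(\xi)\,\overline{\hat\nu(\xi+\eta)}.
\]
This sum is real; averaging it with its complex conjugate after the index shift $\xi\mapsto\xi+\eta$ inside the conjugate produces the key ``discrete-derivative'' identity
\[
-\int \nabla V(x-y)\cdot(\psi(x)-\psi(y))\,\nu^{\otimes 2}=-\sum_{\xi,\eta} i\,\hat\psi(\eta)\cdot\bigl[\xi\hat V(\xi)-(\xi+\eta)\hat V(\xi+\eta)\bigr]\,\hat\nu(\xi)\,\overline{\hat\nu(\xi+\eta)}.
\]

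I would then split the bracket as $\xi\hat V(\xi)-(\xi+\eta)\hat V(\xi+\eta)=-\eta\,\hat V(\xi+\eta)+\xi\bigl(\hat V(\xi)-\hat V(\xi+\eta)\bigr)$ and estimate the two contributions separately. The first piece couples to $\hat\psi(\eta)\cdot\eta=-i\,\widehat{{\rm div}\,\psi}(\eta)$, which is controlled by $\|{\rm div}\,\psi\|_\infty$; a Cauchy-Schwarz in $\xi$ exploiting $\hat V\ge 0$, followed by a shift of summation variable, reduces this term to a constant times $\|{\rm div}\,\psi\|_\infty\int |\hat\nu(\xi)|^2\hat V(\xi)\,d\xi$, matching the desired bound. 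The second piece is a genuine commutator: I would write $\hat V(\xi)-\hat V(\xi+\eta)$ as an integral of $\nabla_\xi\hat V$ along the segment from $\xi$ to $\xi+\eta$, trade the extra $\xi$ factor against a modulus-of-continuity bound on $\hat V$, and absorb the remaining weight $|\eta||\hat\psi(\eta)|$ into $\|\nabla\psi\|_\infty$ through Plancherel.

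The hard part will be exactly this commutator step: controlling $\xi\bigl(\hat V(\xi)-\hat V(\xi+\eta)\bigr)$ in terms of $\hat V(\xi)$ and $\hat V(\xi+\eta)$ alone requires quantitative regularity of $\hat V$ going well beyond the bare positivity $\hat V\ge 0$. In the applications later in the paper this regularity is delivered precisely by hypothesis \eqref{hyp05}, $|\nabla_\xi\hat V(\xi)|\le C\hat V(\xi)/(1+|\xi|)$ up to a lower-order correction, which yields exactly the match between the $|\xi|$ weight and $\hat V$ needed for the estimate to close. Alternatively, I would first establish the lemma for the regularized kernel $V_\eps$ produced by Lemma \ref{lemmatruncate}, whose Fourier transform is smooth while preserving $\hat V_\eps\ge 0$ and $V_\eps\to V$ in $L^1$, and then pass to the limit $\eps\to 0$ using the approximation controls of that lemma. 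Once the commutator estimate is secured, the remaining Cauchy-Schwarz manipulations deliver the advertised bound with constant proportional to $\|\psi\|_{W^{1,\infty}}$.
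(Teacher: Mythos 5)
Your proposal matches the paper's proof: pass to Fourier, obtain the kernel $\xi\hat V(\xi)-\zeta\hat V(\zeta)$ acting against $\hat\psi(\xi-\zeta)\,\overline{\hat\nu(\xi)}\,\hat\nu(\zeta)$, split it into the divergence piece $(\xi-\zeta)\hat V(\zeta)$ (closed by $\hat V\ge 0$ and Cauchy--Schwarz) and the commutator piece $\xi\bigl(\hat V(\xi)-\hat V(\zeta)\bigr)$ closed via the Fourier-derivative bound \eqref{hyp05} --- and you rightly observe that $\hat V\ge0$ alone cannot control the commutator, which the paper itself concedes by opening the proof with ``this is where we need \eqref{hyp05}'', so the hypothesis list \eqref{hyp01}--\eqref{hyp03} in the statement is a slip (Corollary \ref{controlINvanishing} invokes the lemma under \eqref{hyp01}--\eqref{hyp05}). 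One caution on your fallback: proving the bound for $V_\eps$ and passing to the limit does not sidestep the issue, since smoothness of $V_\eps$ alone does not give the pointwise comparison $|\nabla_\xi\hat V_\eps(\xi)|\lesssim\hat V_\eps(\xi)/(1+|\xi|)$; in the paper, Lemma \ref{lemfourier} is applied directly to $V_\eps$, which must itself inherit \eqref{hyp05} from $V$.
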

\begin{Remark}
Remark that Riesz and Coulomb Kernel satisfy Hypothesis \eqref{hyp01}--\eqref{hyp05}.
\end{Remark}
 
\begin{proof}
 This is where we need \eqref{hyp05} which gives studying $|\xi| \sim |\zeta|$
$$|\hat V(\xi) - \hat V(\zeta)| \le 
        \frac{C |\xi-\zeta|}{1+|\zeta|}  \hat V(\zeta)
$$ 
using Gronwall Lemma. For $\psi$ regular enough,  we use the following calculation
\[
\begin{split}
& - \int \nabla V(x-y) (\psi(x)-\psi(y)) \nu^{\otimes 2}
     = - {\rm Re} \int i (\xi \hat V(\xi) - \zeta \hat V(\zeta)) 
       \hat\psi(\xi-\zeta) \overline{\hat\nu(\xi)} \hat\nu(\zeta)  d\xi d\zeta \\
   &\qquad = - {\rm Re} \int i \left(\xi (\hat V(\xi) -  \hat V(\zeta)) + (\xi-\zeta) \hat V(\zeta)\right) 
       \hat\psi(\xi-\zeta) \overline{\hat\nu(\xi)} \hat\nu(\zeta)  d\xi d\zeta \\
  &\qquad \le C \int  |\xi-\zeta|| \hat \psi(\xi-\zeta)| \sqrt{\hat V(\xi)} \sqrt{\hat V(\zeta)} 
           |\hat\nu(\zeta)| |\hat \nu(\xi)| d\xi d\zeta \\
\end{split}
\]
and therefore, using some regularity on $\psi$, by Cauchy-Schwartz
$$
-  \int \nabla V(x-y) (\psi(x)-\psi(y)) \nu^{\otimes 2} 
  \le C \int |\hat{\nu}(\zeta)|^2  \hat V(\zeta) \, d\zeta.$$
\end{proof}
Of course we cannot directly use Lemma \ref{fourier} with $V$ on $I_N$ as
\[
\int V(x-y)\,(d\mu_N-d\bar\rho)^{\otimes2}
\]
will in general be infinite as the diagonal is not removed. But we can now easily combine Lemma \ref{lemfourier} with Lemma \ref{KNgeq0} to obtain
\begin{Corollary}
  \label{controlINvanishing}
  Assume that $\psi\in W^{s,\infty}$ and that $V$ satisfies \eqref{hyp01}--\eqref{hyp05}. Then there exists a function $\eta(N)$ with $\eta(N)\to 0$ as $N\to \infty$ and such that
  \[
I_N =- \int_{\Pi^{dN}} d\rho_N \int_{\Pi^{2d}\cap \{x\not=y\}}
     \nabla V(x-y) \cdot (\psi(x)-\psi(y)) (d\mu_N-d \bar\rho)^{\otimes 2}\leq C\,\sigma_N\,\mathcal{K}_N(t)+\eta(N).
  \]
  \end{Corollary}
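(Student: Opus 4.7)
The strategy is to decompose $\nabla V = \nabla V_\eps + (\nabla V - \nabla V_\eps)$, where $V_\eps$ is the regularized kernel from Lemma \ref{lemmatruncate}. This gives $I_N = I_N^{\mathrm{reg}} + I_N^{\mathrm{err}}$, with the first piece handled by the Fourier-type estimate of Lemma \ref{lemfourier} and the second by pointwise bounds combined with Lemma \ref{KNgeq0}.

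For $I_N^{\mathrm{reg}}$, observe first that $\psi(x)-\psi(y)$ vanishes on the diagonal, so the restriction $\{x\neq y\}$ can be dropped. Since $\hat V_\eps \ge 0$, Lemma \ref{lemfourier} applied to $\nu = \mu_N - \bar\rho$ yields
\[
I_N^{\mathrm{reg}} \le C \int d\rho_N \int |\widehat{\mu_N-\bar\rho}(\xi)|^2\,\hat V_\eps(\xi)\,d\xi = C \int d\rho_N \int V_\eps(x-y)\,(d\mu_N - d\bar\rho)^{\otimes 2}
\]
by Parseval. Inequality \eqref{intermed1} from the proof of Lemma \ref{KNgeq0} then swaps $V_\eps$ for $V$ on $\{x\neq y\}$ while absorbing truncation errors, producing $I_N^{\mathrm{reg}} \le 2C\,\sigma_N\mathcal{K}_N + C\,\eta_1(\eps,\delta,N)$ with $\eta_1 = \|V_\eps\|_\infty/N + \delta^\alpha + \eps + \eps/\delta^k$.

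For $I_N^{\mathrm{err}}$, split the domain into close ($|x-y|<\delta$) and far ($|x-y|\ge\delta$) regions. In the close region the crucial observation is the pointwise bound $|\nabla V(x-y)(\psi(x)-\psi(y))| \le C V(x-y)$, obtained by combining hypothesis \eqref{hyp02} with the Lipschitz estimate $|\psi(x)-\psi(y)| \le \|\nabla\psi\|_\infty |x-y|$; the same type of bound holds for $\nabla V_\eps$ via $V_\eps \le V+\eps$ and the convolution structure inherited from Lemma \ref{lemmatruncate}. Expanding $(d\mu_N - d\bar\rho)^{\otimes 2}$ yields three types of contributions: the $d\bar\rho^{\otimes 2}$ and $d\mu_N\otimes d\bar\rho$ terms are $O(\delta^\alpha)$ thanks to $V \in L^p$ and the smoothness of $\bar\rho$, while the $d\mu_N^{\otimes 2}$ term is exactly $\frac{1}{N^2}\sum_{i\neq j} V(x_i-x_j)\mathbb{I}_{|x_i-x_j|<\delta}$, controlled by the second conclusion of Lemma \ref{KNgeq0} by $4\,\sigma_N\mathcal{K}_N+\eta(N)+\eta(\delta)$.

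The far-region contribution to $I_N^{\mathrm{err}}$ is the main technical obstacle. The $L^1$ estimate $\|\mathbb{I}_{|x|\ge\delta}(\nabla V - \nabla V_\eps)\|_{L^1} \le C\eps/\delta^{k'}$ handles the $d\bar\rho^{\otimes 2}$ and $d\mu_N\otimes d\bar\rho$ terms directly, but the $d\mu_N^{\otimes 2}$ piece resists a naive total-variation argument since the empirical measure admits no $L^\infty$ dual bound. The plan is to exploit once more the global pointwise majorization $|(\nabla V - \nabla V_\eps)(\psi(x)-\psi(y))| \le C V(x-y)$ (valid throughout the torus using \eqref{hyp02} together with the bound $V_\eps \le C V$ away from the origin from the construction in Lemma \ref{lemmatruncate}), so that this contribution is again dominated by the sum $\frac{1}{N^2}\sum_{i\neq j}V(x_i-x_j)$ in the appropriate region and thus controlled by $\sigma_N\mathcal{K}_N$ plus errors via Lemma \ref{KNgeq0}. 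It then suffices to choose $\eps(N)$ and $\delta(N)$ both tending to zero slowly enough that $\|V_\eps\|_\infty/N$, $\eps/\delta^k$, $\eps/\delta^{k'}$ and $\delta^\alpha$ all vanish, packaging all residuals into a single function $\eta(N)\to 0$ and completing the proof.
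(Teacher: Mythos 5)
Your overall strategy is sound and parallels the paper's: regularize via $V_\eps$ from Lemma~\ref{lemmatruncate}, use the Fourier bound of Lemma~\ref{lemfourier} on the regular piece, and use Lemma~\ref{KNgeq0} to absorb the contribution of nearby particles into $\sigma_N\mathcal{K}_N$. The decomposition $\nabla V = \nabla V_\eps + (\nabla V - \nabla V_\eps)$ is a legitimate alternative to the paper's split by the distance $|x-y|\lessgtr\delta$, and your handling of $I_N^{\mathrm{reg}}$ (drop the diagonal since $\psi(x)-\psi(y)$ vanishes there, apply Lemma~\ref{lemfourier}, then swap $V_\eps$ back for $V$ via \eqref{intermed1}) is correct.

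The gap is in the far-region $d\mu_N^{\otimes 2}$ piece of $I_N^{\mathrm{err}}$. You correctly identify that the $L^1$ estimate does not dualize against $\mu_N\otimes\mu_N$, but the replacement you propose does not work. The bound $|(\nabla V - \nabla V_\eps)(x-y)(\psi(x)-\psi(y))|\leq C\,V(x-y)$ is far too crude on $\{|x-y|\geq\delta\}$: it throws away the cancellation between $\nabla V$ and $\nabla V_\eps$ entirely, and it would lead you to estimate this piece by $\frac{C}{N^2}\sum_{i\neq j}V(x_i-x_j)\,\mathbb{I}_{|x_i-x_j|\geq\delta}$. That sum is $O(1)$ even when $\sigma_N\mathcal{K}_N$ is small, and Lemma~\ref{KNgeq0} controls only the \emph{close} part $\mathbb{E}\bigl(\frac{1}{N^2}\sum_{i\neq j}V(x_i-x_j)\mathbb{I}_{|x_i-x_j|\leq\delta}\bigr)$, not the far one. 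So as written this step gives an $O(1)$ contribution rather than a vanishing one.

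What rescues the far region is the \emph{pointwise} smallness of $\nabla V - \nabla V_\eps$ away from the origin, which is established in the proof of Lemma~\ref{lemmatruncate} even though only the $L^1$ consequence is recorded in its statement. Since $V_\eps$ (up to the normalization by $1+2C\eps$) is an average of convolutions $K_{\delta_{n_i}}\star V$ with $\max_i \delta_{n_i}\lesssim\eps$, the hypothesis \eqref{hyp001} together with the doubling property gives, for $|z|\geq\delta\geq 2\max_i\delta_{n_i}$, a bound of the form $|\nabla V(z)-\nabla V_\eps(z)|\leq C\,\eps/|z|^{k'}\leq C\,\eps/\delta^{k'}$. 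This controls $\frac{1}{N^2}\sum_{i\neq j}|\nabla V - \nabla V_\eps|(x_i-x_j)\,\mathbb{I}_{|x_i-x_j|\geq\delta}\leq C\,\eps/\delta^{k'}$ directly, with the boundedness of $\psi$ absorbed in the constant. That is the estimate implicitly invoked by the paper in \eqref{intermed0'}, and it is what you need in place of the $CV$ majorization to close the argument.
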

\begin{proof}
  The basic strategy is again to split $I_N$ into two parts
  \[
  \begin{split}
&I_N= - \int_{\Pi^{dN}} d\rho_N \int_{\{|x-y|\leq \delta\}\cap \{x\not=y\}}
    \nabla V(x-y) \cdot (\psi(x)-\psi(y)) (d\mu_N-d \bar\rho)^{\otimes 2}\\
    &\quad -\int_{\Pi^{dN}} d\rho_N \int_{\{|x-y|> \delta\}}
    \nabla V(x-y) \cdot (\psi(x)-\psi(y)) (d\mu_N-d \bar\rho)^{\otimes 2}.
  \end{split}
  \]
  For the second term in the right-hand side, we want to replace $\nabla V$ by $\nabla V_\eps$. We simply use the second point of Lemma \ref{lemmatruncate} again (similarly to the obtention of \eqref{intermed0} in Lemma~\ref{KNgeq0}) to get that
  \begin{equation}
\begin{split}
    & - \int_{\{|x-y|> \delta\}}
  \nabla V(x-y) \cdot (\psi(x)-\psi(y)) (d\mu_N-d \bar\rho)^{\otimes 2}\\
  &\qquad\leq  - \int_{\{|x-y|> \delta\}}
    \nabla V_\eps(x-y) \cdot (\psi(x)-\psi(y)) (d\mu_N-d \bar\rho)^{\otimes 2}+C\,\frac{\eps}{\delta^{k'}}.
\end{split}\label{intermed0'}
  \end{equation}
  For the first term in the right-hand side, we first use the regularity of $\bar\rho$ together with the $L^p$ bound on $V$ from \eqref{hyp01} to deduce that
  \[
  \begin{split}
& - \int_{\{|x-y|\leq \delta\}\cap \{x\not=y\}}
    \nabla V(x-y) \cdot (\psi(x)-\psi(y)) (d\mu_N-d \bar\rho)^{\otimes 2}\\
    &\quad\leq - \int_{\{|x-y|\leq \delta\}\cap \{x\not=y\}}
    \nabla V(x-y) \cdot (\psi(x)-\psi(y)) d\mu_N^{\otimes 2}+C\,\delta^\alpha,
  \end{split}
  \]
  for some $\alpha>0$. We know use the pointwise bound on $\nabla V$ from \eqref{hyp02} and the Lipschitz bound on $\psi$ to obtain that
  \[
  \begin{split}
& -\int_{\{|x-y|\leq \delta\}\cap \{x\not=y\}}
    \nabla V(x-y) \cdot (\psi(x)-\psi(y)) (d\mu_N-d \bar\rho)^{\otimes 2}\\
    &\quad\leq C\,\int_{\{|x-y|\leq \delta\}\cap \{x\not=y\}}
    V(x-y)\, d\mu_N^{\otimes 2}+C\,\delta^\alpha.
  \end{split}
  \]
  
  Using the second point in Lemma \ref{KNgeq0}, we hence have that
\begin{equation}
\begin{split}
& - \int_{\Pi^{dN}} d\rho_N \int_{\{|x-y|\leq \delta\}\cap \{x\not=y\}}
    \nabla V(x-y) \cdot (\psi(x)-\psi(y)) (d\mu_N-d \bar\rho)^{\otimes 2}  \\
&  \hskip9cm   \leq C\,\sigma_N\,\mathcal{K}_N+\eta(N)+\eta(\delta).\label{intermed1'}
 \end{split}
\end{equation}
  By the construction of $V_\eps$ the same estimate applies
  \begin{equation}
  \begin{split}
& -\int_{\Pi^{dN}} d\rho_N \int_{\{|x-y|\leq \delta\}\cap \{x\not=y\}}
    \nabla V_\varepsilon (x-y) \cdot (\psi(x)-\psi(y)) (d\mu_N-d \bar\rho)^{\otimes 2}\\
 & \hskip9cm \leq C\,\sigma_N\,\mathcal{K}_N+\eta(N)+\eta(\delta).\label{intermed2'}
    \end{split} 
    \end{equation}
  We may combine \eqref{intermed2'} with \eqref{intermed0'} to obtain that
  \[
\begin{split}
    &- \int_{\Pi^{dN}} d\rho_N\int_{\{|x-y|> \delta\}\cap \{x\not=y\}}
  \nabla V(x-y) \cdot (\psi(x)-\psi(y)) (d\mu_N-d \bar\rho)^{\otimes 2}\\
  &\qquad\leq - \int_{\Pi^{dN}} d\rho_N\int_{ \{x\not=y\}}
  \nabla V_\eps(x-y) \cdot (\psi(x)-\psi(y)) (d\mu_N-d \bar\rho)^{\otimes 2}\\
  &\qquad\ +C\,\frac{\eps}{\delta^{k'}}+C\,\sigma_N\,\mathcal{K}_N+\eta(N)+\eta(\delta).
\end{split}
 \]
  Together with \eqref{intermed1'}, this finally concludes that
  \begin{equation}
\begin{split}
    &- \int_{\Pi^{dN}} d\rho_N\int_{ \{x\not=y\}}
  \nabla V(x-y) \cdot (\psi(x)-\psi(y)) (d\mu_N-d \bar\rho)^{\otimes 2}\\
  &\qquad\leq - \int_{\Pi^{dN}} d\rho_N\int
  \nabla V_\eps(x-y) \cdot (\psi(x)-\psi(y)) (d\mu_N-d \bar\rho)^{\otimes 2}\\
  &\qquad\ +C\,\frac{\eps}{\delta^{k'}}+C\,\sigma_N\, \mathcal{K}_N+\eta(N)+\eta(\delta).
\end{split}\label{intermed3'}
  \end{equation}
  We know apply Lemma \ref{lemfourier} on $V_\eps$ which by construction still satisfies \eqref{hyp01}-\eqref{hyp05} and this yields
  \[
  \begin{split}
    & - \int_{\Pi^{dN}} d\rho_N\int_{ \{x\not=y\}}
  \nabla V(x-y) \cdot (\psi(x)-\psi(y)) (d\mu_N-d \bar\rho)^{\otimes 2}\\
  &\qquad\leq  C\,\int_{\Pi^{dN}} d\rho_N\int
  V_\eps(x-y)\, (d\mu_N-d \bar\rho)^{\otimes 2}\\
  &\qquad\ +C\,\frac{\eps}{\delta^{k'}}+C\,\sigma_N\, \mathcal{K}_N+\eta(N)+\eta(\delta).
  \end{split}
  \]
  We may now remove the diagonal
   \[
  \begin{split}
    &- \int_{\Pi^{dN}} d\rho_N\int_{ \{x\not=y\}}
  \nabla V(x-y) \cdot (\psi(x)-\psi(y)) (d\mu_N-d \bar\rho)^{\otimes 2}\\
  &\qquad\leq  C\,\int_{\Pi^{dN}} d\rho_N\int_{\{x\neq y\}}
  V_\eps(x-y)\, (d\mu_N-d \bar\rho)^{\otimes 2}+C\,\frac{\|V_\eps\|_{L^\infty}}{N}\\
  &\qquad\ +C\,\frac{\eps}{\delta^{k'}}+C\,\sigma_N\, \mathcal{K}_N+\eta(N)+\eta(\delta),
  \end{split}
  \]
  and using the third point in Lemma \ref{lemmatruncate},
  \[
  \begin{split}
    &- \int_{\Pi^{dN}} d\rho_N\int_{ \{x\not=y\}}
  \nabla V(x-y) \cdot (\psi(x)-\psi(y)) (d\mu_N-d \bar\rho)^{\otimes 2}\\
  &\qquad\leq  C\,\int_{\Pi^{dN}} d\rho_N\int_{\{x\neq y\}}
  V(x-y)\, (d\mu_N-d \bar\rho)^{\otimes 2}+C\,\eps+C\,\frac{\|V_\eps\|_{L^\infty}}{N}\\
  &\qquad\ +C\,\frac{\eps}{\delta^{k'}}+C\,\sigma_N\, \mathcal{K}_N+\eta(N)+\eta(\delta).
  \end{split}
  \]
  The conclusion follows by optimizing in $\eps$ and $\delta$. 
  \end{proof}

\section{Control of terms for Repulsive Kernels with fixed viscosity $\sigma>0$}

 \subsection{A large deviation result}
 \begin{Proposition}\label{LargeDeviation}
There exists $\delta_0>0$ and some exponent $\theta>0$, s.t. for any $\bar\rho\in L^{\infty}\cap \mathcal{P}(\Pi^d)$ with $\log \bar\rho\in W^{1,\infty}$, for any $W\in L^p(\Pi^{2d})$ for some $p>1$ with $W(-x)=W(x)$, $W\geq 0$, $|\nabla W(x)|\leq \frac{C}{|x|^k}$ and finally $\|W\|_{L^1}\leq \delta_0$ then 
  \[
 \frac{1}{N}\log \int_{\Pi^{dN}} e^{-N\,\int_{\{x\neq y\}} W(x-y)\,(d\mu_N-d\bar\rho)^{\otimes^2}}\,\bar\rho^{\otimes^N}\,dX^N\leq  \frac{C}{N^\theta}.
\]
  \end{Proposition}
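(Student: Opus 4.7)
The proposition is a quantitative large-deviation bound for a centered U-statistic under the i.i.d.\ law $\bar\rho^{\otimes N}$. The plan is to follow the combinatorial Taylor-expansion strategy of \cite{JaWa}: expand the exponential as a power series, use a centering property of the kernel to kill most terms, then control the surviving terms via $L^p$ bounds on $W$ together with the smallness $\|W\|_{L^1}\le\delta_0$.

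\textbf{Step 1: doubly-centered reduction.} First introduce the doubly-centered kernel
$$
\tilde W(x,y)=W(x-y)-W\star\bar\rho(x)-W\star\bar\rho(y)+\int W(x'-y')\,d\bar\rho(x')\,d\bar\rho(y'),
$$
which satisfies $\int\tilde W(x,y)\,d\bar\rho(y)=0$ for every $x$ and symmetrically in the first variable. A direct computation gives
$$
N\int_{x\neq y}W(x-y)\,(d\mu_N-d\bar\rho)^{\otimes 2}=\frac{1}{N}\sum_{i\neq j}\tilde W(x_i,x_j)+R_N,
$$
with $|R_N|\le C\|W\|_{L^1}\|\bar\rho\|_{L^\infty}/N$. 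The deterministic remainder $R_N$ only costs a multiplicative $e^{C/N}$ factor, which is compatible with the target bound $e^{CN^{1-\theta}}$.

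\textbf{Step 2: expansion and cancellation.} Next expand
$$
\exp\Bigl(-\frac{1}{N}\sum_{i\neq j}\tilde W(x_i,x_j)\Bigr)=\sum_{k\ge 0}\frac{(-1)^k}{k!\,N^k}\sum_{(i_1,j_1),\ldots,(i_k,j_k)}\prod_{\ell=1}^k \tilde W(x_{i_\ell},x_{j_\ell}),
$$
and integrate against $\bar\rho^{\otimes N}$. If some label in $\{i_1,j_1,\ldots,i_k,j_k\}$ appears in exactly one pair, integrating that variable first and using the centering of $\tilde W$ in the corresponding slot forces the expectation to vanish. Hence only configurations in which every distinct index occurs in at least two pairs survive; in particular the number $m$ of distinct labels satisfies $m\le k$.

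\textbf{Step 3: moment bounds; main obstacle.} For a surviving configuration with $m$ labels the expectation is an $m$-fold integral of a product of $k$ copies of $\tilde W$ against $\bar\rho^{\otimes m}$. Using the hypothesis $W\in L^p$ for some $p>1$ (together with Young's inequality to dominate the convolution terms defining $\tilde W$) and Hölder along the incidence multigraph, each surviving term is bounded by $(C\|W\|_{L^p}+C\|W\|_{L^1})^k$. The combinatorial count of surviving configurations with $m\le k$ labels is at most $N^m(Ck)^{2k}$, so after dividing by $N^k$ one picks up $N^{m-k}\le 1$, and a gain $N^{-1}$ each time $m<k$. The main technical obstacle is extracting the quantitative $N^{-\theta}$ rather than merely the convergence of the series: one must distinguish graphs with $m<k$ (which directly yield a negative power of $N$) from the ``saturated'' matchings $m=k$, whose total mass is bounded by $\|\tilde W\|_{L^2}^k\le(C\delta_0)^{k/2}$ using interpolation between $L^1$ and $L^p$ and the smallness of $\|W\|_{L^1}$. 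The pointwise estimate $|\nabla W(x)|\le C/|x|^k$ intervenes when one regularises $W$ slightly to separate near-diagonal from off-diagonal contributions and to ensure that the surviving integrals are finite. Summing over $k$ and combining both contributions yields $\mathbb{E}[\exp(-\frac{1}{N}\sum_{i\neq j}\tilde W(x_i,x_j))]\le\exp(CN^{1-\theta})$, which with Step~1 is precisely the claim; the full combinatorial bookkeeping is carried out in \cite{BrJaWa1}.
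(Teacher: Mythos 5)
Your proposal takes a genuinely different route from the paper. The paper reduces the Laplace-type integral to a quantitative Sanov/Varadhan large-deviation bound (their Theorem \ref{variantlargedeviation}), after replacing $W$ by the pointwise truncation $W_\eps=W\,\mathbb{I}_{|x|\ge\eps}$ and smoothing the empirical measure by $L_\delta\star\mu_N$; the crucial analytic content is their Lemma~\ref{I(F)log}, which shows that the rate function $I(F_{\tilde W})$ of \eqref{largedeviationfunct} vanishes when $\|\tilde W\|_{L^1}\le\delta_0$ and $\tilde W\ge0$. This is proved by writing the Euler--Lagrange condition for a maximizer, reducing it to the fixed-point equation \eqref{eulerlagrange} for $u=-\tilde W\star(\bar\mu-\bar\rho)$, and invoking uniqueness for small $\|\tilde W\|_{L^1}$. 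The gradient bound $|\nabla W|\le C/|x|^k$ is only used to convert $F_\eps(\mu_N)$ into $F_\eps(L_\delta\star\mu_N)$ with error $C\,\delta/\eps^k$. Your approach, by contrast, is the combinatorial Taylor-expansion/graph-counting strategy of Jabin--Wang applied directly to the doubly-centered U-statistic.

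There is, however, a concrete gap in your Step~3 that prevents the combinatorial route from closing under the stated hypotheses. After expansion and cancellation, a surviving configuration with incidence multigraph having a vertex of degree $d$ produces, upon integration against $\bar\rho^{\otimes m}$, a factor of the form $\int \prod_{\ell=1}^{d} \tilde W(x,z_\ell)\,\bar\rho(x)\,dx$. Controlling such a term for singular $W$ requires $W\in L^{d}$ locally; since $d$ can be as large as $k$ and $k\to\infty$ in the series, the hypothesis $W\in L^p$ for a single fixed $p>1$ is not enough, and neither is $\|W\|_{L^1}\le\delta_0$. Your regularization remark does not rescue this: truncating $W$ at scale $\eps$ gives $\|W_\eps\|_{L^\infty}\sim\eps^{-d/p^*}$, which blows up the $k$-th power and destroys summability. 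Moreover the claimed interpolation $\|\tilde W\|_{L^2}\le(C\delta_0)^{1/2}$ requires $p\ge2$, while the proposition only assumes $p>1$. In short, the quantity you actually need to be small -- some high-power $L^q$ norm of $W$ -- is not controlled by $\|W\|_{L^1}\le\delta_0$ and $W\in L^p$. The paper's Sanov-style argument sidesteps this entirely because the minimization in \eqref{largedeviationfunct} is over absolutely continuous measures with finite entropy, so the only norm of $W$ that enters the rate-function analysis is $\|W\|_{L^1}$; this is exactly why the hypothesis is phrased in terms of $L^1$-smallness. If you wish to salvage a combinatorial proof you would need a finer decomposition of $W$ together with a mechanism (as in the $W^{-1,\infty}$ case of Jabin--Wang) to exploit cancellations not just across labels but within the magnitude of each factor -- a substantially harder bookkeeping that is not present in the sketch.
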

  \begin{proof}
    We denote
    \[
F(\mu_N)=\int_{\{x\neq y\}} W(x-y)\,(d\mu_N-d\bar\rho)^{\otimes^2},
\]
and introduce a simple truncation $W_\eps$ of $W$ by
\[
W_\eps(x)=W(x)\,\mathbb{I}_{|x|\geq \eps}.
\]
We define as well
\[
F_\eps(\mu)=\int_{ \Pi^{2d} \cap \{x\neq y\}} W_\eps(x-y)\,(d\mu-d\bar\rho)^{\otimes2}.
\]
We may expand $F$ to find
\[\begin{split}
  &F(\mu_N)=\int_{\Pi^{2d}\cap\{x\neq y\}} W(x-y) \,( d \mu_N - d \bar\rho)^{\otimes2}\\
  &\quad  =\frac{1}{N^2} \sum_{ i \ne j } W(x_i - x_j)
   - 2\frac{1}{N}\,\sum_i W\star\bar\rho(x_i)\\
  &\qquad+\int_{\Pi^{2d}\cap\{x\neq y\}} W(x-y)\,\bar\rho(x)\,\bar\rho(y)\,dx\,dy.
\end{split}
\]
Now note that by the $L^p$ integrability on $W$ and the $L^\infty$ integrability on $\bar\rho$, 
\[
\begin{split}
|W_\eps\star\bar\rho(x)-W\star\bar\rho(x)|&\leq \|W_\eps-W\|_{L^1}\,\|\bar\rho\|_{L^\infty}\leq \|W\,\mathbb{I}_{|z|\leq \eps}\|_{L^1}\,\|\bar\rho\|_{L^\infty}\\
&\leq C\,\eps^{1/2p^*}\,\|\bar\rho\|_{L^\infty}.
\end{split}
\]
Further note that since $W\geq 0$, we have that $W_\eps(x)\leq W(x)$ so this directly implies that
\[
-F(\mu_N)\leq -F_\eps(\mu_N)+C\,\|\bar\rho\|_{L^\infty}\,\eps^{1/2p^*}.
\]
We are hence led to bounding
\[
Z_{N,\eps}=\frac{1}{N}\log \int_{\Pi^{dN}} e^{-N\,\int_{\Pi^{2d}} W_\eps(x-y)\,(d\mu_N-d\bar\rho)^{\otimes^2}}\,\bar\rho^{\otimes^N}\,dX^N,
\]
since
\begin{equation}
Z_N=\frac{1}{N}\log \int_{\Pi^{dN}} e^{-N\,\int_{\{x\neq y\}} W(x-y)\,(d\mu_N-d\bar\rho)^{\otimes^2}}\,\bar\rho^{\otimes^N}\,dX^N\leq Z_{N,\eps}+C\,\eps^{1/2p^*}.\label{boundZN}
\end{equation}

We now rely on a quantitative variant of a classical large deviation result
\begin{Theorem}
  Assume that $\log\bar\rho\in W^{1,\infty}$ and that $L$ is a standard convolution kernel. Then there exists a constant $C$ depending only on $d$, $L$,  s.t. for any $F:\;\mathcal{P}(\Pi^d)\to \R$, continuous on continuous functions,  one has that
  \[\begin{split}
  &\frac{1}{N}\log \int_{\Pi^{dN}} e^{-N\,F(L_\delta\star\mu_N)}\,\bar\rho_N\,dX^N\leq I(F)\\
  &\qquad+\frac{C}{N^{1/(d+1)}\,\delta^{d/(d+1)}}\,(\log N+|\log \delta|+\|\log\bar\rho\|_{L^\infty})+C\,\delta\,\|\log\bar \rho\|_{W^{1,\infty}},
  \end{split}
  \]
  where
  \begin{equation}
I(F)=\max_{\mu\in \mathcal{P}(\Pi^d)}  -\bigl[F(\mu)+\int_{\Pi^d} \mu\,\log \frac{\mu}{\bar\rho}\,dx\Bigr]. \label{largedeviationfunct}
  \end{equation}
  \label{variantlargedeviation}
  \end{Theorem}
  \begin{proof}
The proof of Theorem \ref{variantlargedeviation} relies on classical arguments and we refer to our upcoming article for more details.
Since $|\nabla W|\leq C/|x|^k$, the potential $W_\eps$ is smooth and hence
\[
-F_\eps(\mu_N)\leq -F_\eps(L_\delta\star \mu_N)+C\,\frac{\delta}{\eps^k}.
\]
Using Theorem \ref{variantlargedeviation}, we thus have that
\begin{equation}
\begin{split}
  Z_{N,\eps}\leq &I(F_\eps)+C\,\frac{\delta}{\eps^k}+\frac{C}{N^{1/(d+1)}\,\delta^{d/(d+1)}}\,(\log N+|\log \delta|+\|\log\bar\rho\|_{L^\infty})\\
  &+C\,\delta\,\|\log\bar \rho\|_{W^{1,\infty}}.
\end{split}\label{boundZNeps}
\end{equation}
The last step of the proof is hence to estimate $I(F_\eps)$ for which we appeal to
\begin{Lemma}
    For any $\bar\rho\in L^\infty$, there exists a truncation $\delta_0$ s.t.  for any $\tilde W$ with $\|\tilde W\|_{L^1}\leq \delta_0$ and $W(x)\geq 0$ then, defining
    \[
F_{\tilde W}(\mu)=\int_{\{x\neq y\}} \tilde W(x-y)\,(\mu(dx)-\bar\rho(x)\,dx)\,(\mu(dy)-\bar\rho(y)\,dy),
  \]
  one then has that $I(F_{\tilde W})=0$.\label{I(F)log} 
\end{Lemma}
Assuming that Lemma \ref{I(F)log} holds, we may apply it to $\tilde W=W_\eps$ which implies  $I(F_\eps)=0$ and finally combining \eqref{boundZN} and \eqref{boundZNeps}
\[
\begin{split}
  Z_{N}\leq &C\,\eps^{1/2p^*}+C\,\frac{\delta}{\eps^k}+\frac{C}{N^{1/(d+1)}\,\delta^{d/(d+1)}}\,(\log N+|\log \delta|+\|\log\bar\rho\|_{L^\infty})\\
  &+C\,\delta\,\|\log\bar \rho\|_{W^{1,\infty}}.
\end{split}
\]
We may immediately conclude by optimizing in $\eps$ and $\delta$.
  \end{proof}

  \begin{proof}[Sketch of the proof of Lemma \ref{I(F)log}]
    Since $\tilde W\geq 0$, $I(F_{\tilde W})$ is coercive and by considering a maximizing sequence, we may find a maximum $\bar\mu$ which is bounded in  $L\,\log L$. Such a maximum must satisfy that
    \[
1+\log \frac{\bar\mu}{\bar\rho} + {2}\,\tilde W\star(\bar\mu-\bar\rho)=\kappa,
\]
where the constant $\kappa$ is chosen so that $\int \bar\mu=1$. This may be rewritten as
\[
\bar\mu=\frac{\bar\rho}{M}\,e^{ - {2}\,\tilde W\star(\bar\mu-\bar\rho)},\quad M=\int \bar\rho\,e^{ - {2\,}\tilde W\star(\bar\mu-\bar\rho)}\,dx.
\]
Let us denote $u= - \tilde W\star (\bar\mu-\bar\rho)$ and to emphasize the dependence on $u$ in $M$
\[
M=M_u=\int \bar\rho\,e^{{2}\,u(x)}\,dx.
\]
We observe that $u$ is a solution to
\begin{equation}
  u= - \tilde W\star\left(\bar\rho\,\left(\frac{e^{{2}\,u(x)}}{M_u}-1\right)
  \right),\label{eulerlagrange}
  \end{equation}
which is in fact a sort of non-linear elliptic equation. It is straightforward to show that the unique solution to \eqref{eulerlagrange} is $u=0$ provided that $\|\tilde W\|_{L^1}$ is small enough.
    \end{proof}
\end{proof}

  \subsection{Control on $E_N$}
  %
  The second ingredient to bound $E_N$ from below is the following classical convexity inequality
  \begin{equation}\label{ineg}
\int_{\Pi^{dN}} \psi(X^N)\,d\rho_N\leq \frac{1}{\alpha} \,  \frac{1}{N}\int d\rho_N\,\log \frac{\rho_N}{\bar\rho_N}+ \frac{1}{\alpha} \frac{1}{N}\log \int_{\Pi^{dN}} e^{ \alpha   N\,\psi(X^N)}\,d\bar\rho_N. 
  \end{equation}
  The proper control of $E_N$ however requires truncating interactions after some distance so that we define
  \[
\mathcal{K}_N^\eta=\frac{1}{2\,\sigma}\,\int_{\Pi^{d\,N}} \int_{\{x\neq y\}} V(x-y)\,\chi(|x-y|/\eta)\,(d\mu_N-d\bar\rho)^{\otimes2}\,\rho_N\,dX^N,
\]
where $\chi$ is some smooth non-negative function with $\chi$=1 on $[0, 1]$ and $\chi=0$ on $[2,\infty)$, together with $\hat\chi\geq 0$. Finally we define 
$E_N^{\eta}=\mathcal{H}_N+\mathcal{K}_N^\eta$. 
  Combined with Prop. \ref{LargeDeviation} this inequality lets us bound from below $E_N^\eta$ as per
  \begin{Proposition}
    \label{boundEN}
 Assume that $V$ satisfies \eqref{hyp01}-\eqref{hyp02} and \eqref{hyp06} then there exists $\eta_0>0$ and $\theta>0$ so that for any $\eta\leq \eta_0$  
  \[
E_N^\eta\geq -\frac{C}{N^\theta}.
\]
Moreover for any $\delta\leq\eta$ with $\eta\leq \eta_0$
\[
\mathbb{E}\left(\frac{1}{N^2}\,\sum_{i\neq j} V(x_i-x_j)\,\mathbb{I}_{|x_i-x_j|\leq \delta}\right)\leq E_N^\eta +\frac{C}{N^\theta}+C\,\delta^\theta.
\]
\end{Proposition}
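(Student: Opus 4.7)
The strategy is to prove both bounds by combining the convexity inequality \eqref{ineg} with the large deviation estimate of Proposition \ref{LargeDeviation}. I first fix $\eta_0>0$ small enough that, by the positivity of $V$ near the origin and the doubling property \eqref{hyp04}, $V\geq 0$ on $\{|x|\leq 2\eta_0\}$, and simultaneously $\|V\chi(\cdot/\eta)/(2\sigma)\|_{L^1}\leq \delta_0$ for every $\eta\leq\eta_0$, where $\delta_0$ is the threshold in Proposition \ref{LargeDeviation}. This is possible since $\chi(\cdot/\eta)$ is supported in $\{|x|\leq 2\eta\}$ and $V\in L^p$, so $\|V\chi(\cdot/\eta)\|_{L^1}\leq C\eta^{d/p^*}$ by H\"older.

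For the first bound, set $W_\eta:=V\chi(\cdot/\eta)/(2\sigma)$ and note that it is nonnegative, symmetric, in $L^p$, satisfies $|\nabla W_\eta|\leq C/|x|^k$, and has $L^1$ norm at most $\delta_0$ by construction, hence meets the hypotheses of Proposition \ref{LargeDeviation}. Apply the convexity inequality \eqref{ineg} with $\alpha=1$ and test function $\psi(X^N)=-\int_{x\neq y}W_\eta(x-y)(d\mu_N-d\bar\rho)^{\otimes 2}$: by construction $\int\psi\,d\rho_N=-\mathcal{K}_N^\eta$, while the exponential moment on the right of \eqref{ineg} is exactly the quantity bounded by $C/N^\theta$ in Proposition \ref{LargeDeviation}. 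Rearranging gives $E_N^\eta=\mathcal{H}_N+\mathcal{K}_N^\eta\geq -C/N^\theta$.

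For the second bound, since $\delta\leq\eta$ and $V\geq 0$ near the origin, we have the pointwise inequality $V(x)\mathbb{I}_{|x|\leq\delta}\leq V(x)\chi(|x|/\eta)$ (using $\chi=1$ on $[0,1]$), so that
\[
\frac{1}{N^2}\sum_{i\neq j}V(x_i-x_j)\mathbb{I}_{|x_i-x_j|\leq\delta}\leq \int_{x\neq y}V\chi(\cdot/\eta)\, d\mu_N^{\otimes 2}.
\]
Expanding $d\mu_N^{\otimes 2}=(d\mu_N-d\bar\rho)^{\otimes 2}+2\,d(\mu_N\otimes\bar\rho)-d\bar\rho^{\otimes 2}$ and taking expectation against $\rho_N$, the quadratic piece contributes $2\sigma\mathcal{K}_N^\eta$. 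The sharper $\delta^\theta$ error is then obtained by performing the linear-in-$\mu_N$ expansion with $V\mathbb{I}_\delta$ in place of $V\chi(\cdot/\eta)$, which yields $C\delta^{d/p^*}$ via H\"older on $V\in L^p$, and reconstructing the quadratic term using the identity $V\chi(\cdot/\eta)=V\mathbb{I}_\delta+V(\chi(\cdot/\eta)-\mathbb{I}_\delta)$; the crucial point is that $V(\chi(\cdot/\eta)-\mathbb{I}_\delta)\geq 0$ pointwise, so its integral against $(d\mu_N-d\bar\rho)^{\otimes 2}$ is bounded below by a convolution term of order $\eta^{d/p^*}$ that is absorbed into the constant via the first-part lower bound. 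Combining with the trivial $\mathcal{K}_N^\eta\leq E_N^\eta$ (since $\mathcal{H}_N\geq 0$) then concludes the second estimate.

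The main obstacle is disentangling the two length scales so that a genuine $\delta^\theta$ error is recovered on the close-particle region rather than the easier $\eta^\theta$ one produced by a uniform H\"older bound. The nonnegativity of $V$ on $\{|x|\leq 2\eta_0\}$ (guaranteed by \eqref{hyp04} for small $\eta_0$) and the monotonicity of $\chi$ are the key structural ingredients that make the transition piece $V(\chi(\cdot/\eta)-\mathbb{I}_\delta)$ pointwise nonnegative, which is exactly what allows the one-sided bound of its contribution against the signed product measure $(d\mu_N-d\bar\rho)^{\otimes 2}$.
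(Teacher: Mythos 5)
Your proof of the first lower bound $E_N^\eta\geq -C/N^\theta$ is correct and matches the paper: apply \eqref{ineg} to $-\mathcal{K}_N^\eta$ and then Proposition~\ref{LargeDeviation} to $W=\frac{1}{2\sigma}V\chi(\cdot/\eta)$, the $L^1$-smallness hypothesis being ensured by choosing $\eta_0$ small.

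The second estimate, however, has a genuine gap. You bound $\mathbb{I}_{|x|\leq\delta}$ by the coarse-scale cutoff $\chi(|x|/\eta)$, which forces you to deal with the transition piece $W=V\,(\chi(\cdot/\eta)-\mathbb{I}_\delta)$. Two problems follow. First, $W$ contains the sharp indicator $\mathbb{I}_\delta$ and is therefore not smooth: the hypothesis $|\nabla W|\leq C/|x|^k$ of Proposition~\ref{LargeDeviation} fails across the sphere $\{|x|=\delta\}$, so the large deviation estimate is unavailable for it. Second, the substitute argument you invoke --- a crude nonnegativity-plus-convolution bound giving $\int_{x\neq y}W\,(d\mu_N-d\bar\rho)^{\otimes 2}\geq -C\|W\|_{L^1}$ --- produces an error of order $\eta^{1/p^*}$, not $C/N^\theta$. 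Since $\eta\leq\eta_0$ is a \emph{fixed} small constant (not vanishing with $N$ or with $\delta$), this $\eta^\theta$ term cannot be absorbed into the claimed right-hand side $E_N^\eta+C/N^\theta+C\delta^\theta$; your remark that it is ``absorbed into the constant via the first-part lower bound'' does not work, as the first part only tells you $E_N^\eta\geq -C/N^\theta$ and gives no control of an additive $\eta^\theta$.

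The fix, which is what the paper does, is to mollify at scale $\delta$ rather than $\eta$: bound $\mathbb{I}_{|x|\leq\delta}\leq\chi(|x|/\delta)$, so that expanding $d\mu_N^{\otimes 2}$ still gives the $C\delta^\theta$ cross-term error, while the transition piece becomes $W=V\,(\chi(\cdot/\eta)-\chi(\cdot/\delta))$. This $W$ is nonnegative (for $\delta\leq\eta$), small in $L^1$ (dominated by $\|V\chi(\cdot/\eta)\|_{L^1}\leq C\eta^{1/p^*}$), and crucially \emph{smooth}, with $|\nabla W|\leq C/|x|^k$ thanks to \eqref{hyp001} together with \eqref{hyp02} (which controls the $V\nabla\chi$ terms). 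You can then run \eqref{ineg} and Proposition~\ref{LargeDeviation} a second time on this $W$, obtaining the genuine $C/N^\theta$ error for the transition piece and hence the claimed bound $E_N^\eta+C/N^\theta+C\delta^\theta$.
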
   
  \begin{proof}
    For the first point, using \eqref{ineg} on $\mathcal{K}_N^\eta$, we find that
    \[
-\mathcal{K}_N^\eta\leq H_N+\frac{1}{N}\log \int_{\Pi^{dN}} e^{-\frac{N}{2\,\sigma}\,\int_{\{x\neq y\}} V(x-y)\,\chi(|x-y|/\eta)\, (d\mu_N-d\bar\rho)^{\otimes2}}\,d\bar\rho_N.
\]
We now apply Prop. \ref{LargeDeviation} to $W=\frac{1}{2\,\sigma}\,V(x)\,\chi(|x|/\eta)$. From \eqref{hyp01} and \eqref{hyp001}, $W$ trivially satisfies all assumptions of Prop. \ref{LargeDeviation} with the exception of $\|W\|_{L^1}\leq \delta_0$. For this, we remark that
\[
\left\|V(x)\,\chi(|x|/\eta)\right\|_{L^1}\leq \int_{|x|\leq 2\,\eta} V(x)\,dx\leq C\,\|V\|_{L^p}\,\eta^{1/p^*},
\]
so that $\|W\|_{L^1}\leq \delta_0$ is ensured by choosing $\eta\leq\eta_0$ with $\eta_0$ small enough. Therefore Prop. \ref{LargeDeviation} implies that
\[
-\mathcal{K}_N^\eta\leq H_N+\frac{C}{N^\theta}.
\]
For the second point, observe first that
\[\begin{split}
\frac{1}{N^2}\,\sum_{i\neq j} V(x_i-x_j)\,\mathbb{I}_{|x_i-x_j|\leq \delta}&\leq \frac{1}{N^2}\,\sum_{i\neq j} V(x_i-x_j)\,\chi(|x_i-x_j|/ \delta)\\
&\leq \int_{\{x\neq y\}} V(x-y)\,\chi(|x-y|/ \delta)\,(d\mu_N-d\bar\rho_N)^{\otimes2}
+C\,\delta^\theta.
\end{split}
\]
Therefore
\[\begin{split}
&\mathbb{E}\left(\frac{1}{2\,\sigma\,N^2}\,\sum_{i\neq j} V(x_i-x_j)\,\mathbb{I}_{|x_i-x_j|\leq \delta}\right)
-E_N^\eta \\
&\quad \leq -H_N-\frac{1}{2\,\sigma}\int_{\{x\neq y\}} V(x-y)\,(\chi(|x-y|/\eta)-\chi(|x-y|/ \delta))\,(d\mu_N-d\bar\rho_N)^{\otimes2}+C\,\delta^\theta.
\end{split}
\]
Using \eqref{ineg}
\[\begin{split}
&
\mathbb{E}\left(\frac{1}{2\,\sigma\,N^2}\,\sum_{i\neq j} V(x_i-x_j)\,\mathbb{I}_{|x_i-x_j|\leq \delta}
\right) -E_N^\eta \\
&\quad \leq \frac{1}{N}\log\int_{\Pi^{d\,N}} e^{-\frac{N}{2\,\sigma}\int_{\{x\neq y\}} V(x-y)\,(\chi(|x-y|/\eta)-\chi(|x-y|/ \delta))\,(d\mu_N-d\bar\rho_N)^{\otimes2}}\,d\bar\rho_N+C\,\delta^\theta.
\end{split}
\]

Of course $\chi(|x-y|/\eta)-\chi(|x-y|/ \delta)\geq 0$ if $\delta\leq \eta$ so that we may again apply Prop. \ref{LargeDeviation} to $W(x)=V(x)\,(\chi(|x|/\eta)-\chi(|x|/ \delta))$ and find as claimed
\[
\mathbb{E}\left(\frac{1}{2\,\sigma\,N^2}\,\sum_{i\neq j} V(x_i-x_j)\,\mathbb{I}_{|x_i-x_j|\leq \delta}\right) -E_N^\eta
\leq C\,\delta^\theta+\frac{C}{N^\theta}.
\]
  \end{proof}
  \subsection{Control of the right-hand side}
  As for the case $\sigma\to 0$, the goal is to control
  \[
I_N= -\int_{\Pi^{d\,N}}\int_{\{x\neq y\}} \nabla V(x-y)\cdot(\psi(x)-\psi(y))\,(d\mu_N-d\bar\rho)^{\otimes2}\,d\rho_N.
  \]
  The Fourier assumption \eqref{hyp05} for $\sigma>0$ is more general that assumption \eqref{hyp05} in the case $\sigma=0$ because we can use Inequality \eqref{ineg} and Theorem \ref{LargeDeviation}
to control the new term in the Fourier procedure. More precisely, we have
\begin{Lemma} Assume that $\psi \in W^{k,+\infty}$ with $k$ large enough and that $V$ satisfies  \eqref{hyp01}-\eqref{hyp02} and
  \begin{equation}
|\hat V(\xi) - \hat V(\zeta)| \le C \frac{|\xi-\zeta|}{1+|\zeta|} \Bigl(\hat V(\zeta) + \frac{\chi(\zeta)}{1+|\zeta|^{d-\alpha}}\Bigr)\label{diffhatV}
  \end{equation}
with $0<\alpha<d$ and $\chi \in L_\zeta^\infty$. Then for any measure $\nu$, we have that
\begin{equation} \label{fourier}
- \int \nabla V(x-y) (\psi(x)-\psi(y)) \nu^{\otimes 2} \le
     C\, \int |\hat \nu(\xi)|^2\, (\hat V(\xi) + \chi(\xi)/(1+|\xi|^{d-\alpha})  \, d\xi.
 \end{equation}\label{lemfourier2}
\end{Lemma}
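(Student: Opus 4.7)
The plan is to adapt the Fourier-side computation from Lemma \ref{lemfourier}, tracking how the new $\chi(\zeta)/(1+|\zeta|^{d-\alpha})$ contribution in \eqref{diffhatV} propagates through the estimate. I would start with the same Plancherel identity,
\[
-\int \nabla V(x-y)(\psi(x)-\psi(y))\,d\nu^{\otimes 2}
= -\mathrm{Re}\int i\bigl(\xi\hat V(\xi)-\zeta\hat V(\zeta)\bigr)\hat\psi(\xi-\zeta)\overline{\hat\nu(\xi)}\hat\nu(\zeta)\,d\xi\,d\zeta,
\]
and the algebraic splitting $\xi\hat V(\xi)-\zeta\hat V(\zeta)=\xi(\hat V(\xi)-\hat V(\zeta))+(\xi-\zeta)\hat V(\zeta)$. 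The piece $(\xi-\zeta)\hat V(\zeta)$ is handled verbatim as in Lemma \ref{lemfourier}, contributing $C\int|\hat\nu(\xi)|^2\hat V(\xi)\,d\xi$.

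For $\xi(\hat V(\xi)-\hat V(\zeta))$ I would apply \eqref{diffhatV}. Its $\hat V(\zeta)$-part again reproduces the bound of Lemma \ref{lemfourier}, so the only genuinely new contribution is
\[
\mathcal{I}=\int |\xi|\,\frac{|\xi-\zeta|}{1+|\zeta|}\,|\hat\psi(\xi-\zeta)|\,\frac{\chi(\zeta)}{1+|\zeta|^{d-\alpha}}\,|\hat\nu(\xi)|\,|\hat\nu(\zeta)|\,d\xi\,d\zeta.
\]
Using the elementary inequality $|\xi|/(1+|\zeta|)\le 1+|\xi-\zeta|$ and setting $G(u):=(|u|+|u|^2)|\hat\psi(u)|$, which lies in $L^1$ once $\psi\in W^{k,\infty}$ with $k$ chosen large enough, this yields $\mathcal{I}\le \int G(\xi-\zeta)\,K(\zeta)\,|\hat\nu(\xi)||\hat\nu(\zeta)|\,d\xi\,d\zeta$, where $K(\zeta):=\chi(\zeta)/(1+|\zeta|^{d-\alpha})$.

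To conclude I would apply a weighted Cauchy--Schwarz of the form
\[
|\hat\nu(\xi)|\,|\hat\nu(\zeta)|\le \frac{1}{2}\Bigl(\frac{K(\xi)}{K(\zeta)}|\hat\nu(\xi)|^2+\frac{K(\zeta)}{K(\xi)}|\hat\nu(\zeta)|^2\Bigr),
\]
which reduces the estimate on $\mathcal{I}$ to a Schur-type bound: it suffices that $\sup_\zeta\int G(\xi-\zeta)K(\zeta)/K(\xi)\,d\xi\le C$. This holds by the rapid polynomial decay of $G$ (i.e.\ of $\hat\psi$) combined with the slow variation of $1/(1+|\cdot|^{d-\alpha})$: splitting the convolution into the near-diagonal region $|\xi-\zeta|\le (1+|\zeta|)/2$, where $1+|\xi|^{d-\alpha}\sim 1+|\zeta|^{d-\alpha}$, and a far region, where the extra decay of $\hat\psi$ absorbs any polynomial growth. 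The result is $\mathcal{I}\le C\int K(\xi)|\hat\nu(\xi)|^2\,d\xi$, which combined with the earlier $\hat V$ contributions produces \eqref{fourier}.

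The main technical obstacle is precisely this last symmetrization step, which converts the asymmetric weight $K(\zeta)$ produced by \eqref{diffhatV} into the symmetric $K(\xi)$ appearing in the conclusion. It is the one place in the argument where the regularity hypothesis \emph{``$\psi\in W^{k,\infty}$ with $k$ large enough''} is genuinely used: one needs enough polynomial decay of $\hat\psi$ to simultaneously ensure $G\in L^1$ and the Schur-test bound $\int G(\xi-\zeta)K(\zeta)/K(\xi)\,d\xi\le C$. The condition $0<\alpha<d$ is what guarantees the weight $K$ is neither trivial nor too singular for this covolution comparison to work. In the applications relevant to \eqref{hyp05} the function $\chi$ is effectively constant, so no regularity issue on $\chi$ itself arises and the argument above goes through cleanly.
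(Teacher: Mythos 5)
Your approach is essentially the one the paper intends: the paper's proof of this lemma consists only of the sentence ``The proof follows the same lines as for Lemma~\ref{lemfourier},'' and you have correctly worked out what that means --- same Plancherel identity, same split $\xi\hat V(\xi)-\zeta\hat V(\zeta)=\xi(\hat V(\xi)-\hat V(\zeta))+(\xi-\zeta)\hat V(\zeta)$, same treatment of the $\hat V$-contributions, with the only new work being the symmetrization of the extra weight $K(\zeta)=\chi(\zeta)/(1+|\zeta|^{d-\alpha})$ from $\zeta$ to $\xi$. You are also right that this last step is where the requirement ``$\psi\in W^{k,\infty}$ with $k$ large enough'' is used, since the polynomial $(1+|\xi|^{d-\alpha})/(1+|\zeta|^{d-\alpha})$-type growth must be absorbed by the decay of $\hat\psi$.

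There is, however, a genuine gap in your final symmetrization step. The weighted Cauchy--Schwarz plus Schur bound $\sup_\zeta\int G(\xi-\zeta)\,K(\zeta)/K(\xi)\,d\xi\le C$ requires control of $\chi(\zeta)/\chi(\xi)$, i.e.\ a lower bound on $\chi$; but the hypothesis only gives $\chi\in L^\infty_\zeta$. If $\chi$ can vanish or become arbitrarily small, the ratio $K(\zeta)/K(\xi)$ is unbounded on the near-diagonal and the Schur test fails. Your closing remark that ``in the applications $\chi$ is effectively constant'' is accurate for \eqref{hyp05}, where $\chi=f(\sigma)$ is literally a constant, but not for the place the lemma is actually invoked (Corollary~\ref{controlINnonvanishing} takes $\chi=\hat K_\eps$, which is bounded but not bounded below). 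The clean fix is to majorize $\chi(\zeta)\le\|\chi\|_{L^\infty}$ before symmetrizing, run your Schur argument with the always-positive weight $\tilde K(\zeta)=1/(1+|\zeta|^{d-\alpha})$ (for which $\tilde K(\zeta)/\tilde K(\xi)\le C(1+|\xi-\zeta|^{d-\alpha})$ holds trivially), and absorb $\|\chi\|_{L^\infty}$ into the constant. This yields the conclusion with $\|\chi\|_{L^\infty}/(1+|\xi|^{d-\alpha})$ in place of $\chi(\xi)/(1+|\xi|^{d-\alpha})$ --- which is what the paper uses anyway in the corollary, where $\hat K_\eps(\xi)$ is in the end simply bounded by a constant. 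Stated for general $L^\infty$ $\chi$ with the weight $\chi(\xi)$ kept pointwise, the lemma is strictly speaking not reachable by this Fourier argument.
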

\begin{proof} 
The proof follows the same lines as for Lemma \ref{lemfourier}. 
\end{proof}
Note that the extra term in the right-hand side of inequality \eqref{fourier} is controlled by the relative entropy because going back to the physical space, we will get a term written as
$$\int_{x\neq y} G(x-y)  (\mu_N-\bar\rho)^{\otimes 2} (dx dy)
$$
with $G(x)\sim |x|^{-\alpha} \in L^p$ for some $p>1$. 
\noindent We can now prove the equivalent of Corollary \ref{controlINvanishing}.
\begin{Corollary}
  \label{controlINnonvanishing}
  Assume that $\psi\in W^{s,\infty}$ and that $V$ satisfies all assumptions \eqref{hyp01}--\eqref{hyp06}. Then there exists $\eta_0$ and $\theta>0$ s.t. if $\eta\leq \eta_0$
  \[
I_N = -\int_{\Pi^{dN}} d\rho_N \int_{\Pi^{2d}\cap \{x\not=y\}}
     \nabla V(x-y) \cdot (\psi(x)-\psi(y)) (d\mu_N-d \bar\rho)^{\otimes 2}\leq C\,E_N^\eta(t)+\frac{C}{N^\theta}.
  \]
  \end{Corollary}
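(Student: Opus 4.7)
The plan is to follow the scheme of Corollary \ref{controlINvanishing}, but with Lemma \ref{KNgeq0} replaced by Proposition \ref{boundEN} and Lemma \ref{lemfourier} replaced by Lemma \ref{lemfourier2}. The new feature, absent in the vanishing-viscosity case, is that Lemma \ref{lemfourier2} produces an extra Fourier tail $\chi(\xi)/(1+|\xi|^{d-\alpha})$, which in physical space becomes a non-negative symmetric kernel $G \in L^p$ for some $p>1$. Controlling the expectation of $\int G(x-y)(d\mu_N-d\bar\rho)^{\otimes 2}$ against $d\rho_N$ by the relative entropy will be the main new step, and it invokes the convexity inequality \eqref{ineg} together with the novel large-deviation estimate in Proposition \ref{LargeDeviation}.

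First I would split $I_N$ at scale $\delta$ into a close-particle piece $I_N^{\le\delta}$ and a far piece $I_N^{>\delta}$. On $I_N^{\le\delta}$, the pointwise bound \eqref{hyp02} and the Lipschitz regularity of $\psi$ yield $|\nabla V(x-y)\cdot(\psi(x)-\psi(y))|\le C\,V(x-y)$; the $L^p$-integrability of $V$ together with the smoothness of $\bar\rho$ reduces the $\bar\rho$ cross terms to an $O(\delta^\theta)$ correction, and the second assertion of Proposition \ref{boundEN} then yields
\[
I_N^{\le\delta}\le C\,E_N^\eta+C\,\delta^\theta+\frac{C}{N^\theta}
\]
as soon as $\delta\le\eta\le\eta_0$. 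On $I_N^{>\delta}$, I would exchange $\nabla V$ for $\nabla V_\eps$ via the second estimate of Lemma \ref{lemmatruncate} (at the cost $C\,\eps/\delta^{k'}$), add back the $\{|x-y|\le\delta\}$ piece for $V_\eps$ (handled exactly as above via $V_\eps\le V+\eps$), and then apply Lemma \ref{lemfourier2} to $V_\eps$, which by construction still satisfies \eqref{hyp01}--\eqref{hyp05}. This produces two Fourier contributions.

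The $\hat V_\eps$ contribution equals, up to a diagonal correction of size $C\,\|V_\eps\|_{L^\infty}/N$, a multiple of $\int V_\eps(x-y)(d\mu_N-d\bar\rho)^{\otimes 2}$; using $V_\eps\le V+\eps$ to switch back to $V$ and then comparing the resulting multiple of $\sigma\,\mathcal{K}_N$ to $\sigma\,\mathcal{K}_N^\eta$ (the tail $V\,(1-\chi(\cdot/\eta))$ is bounded, and hence its expectation is controlled by $C\,\mathcal{H}_N+C/N^\theta$ via one more application of \eqref{ineg} and Proposition \ref{LargeDeviation}) gives a contribution $\le C\,E_N^\eta+C/N^\theta$. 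The Fourier tail $C\int |\hat\nu|^2\,\chi(\xi)/(1+|\xi|^{d-\alpha})\,d\xi$ inverts in physical space to $C\int G(x-y)(d\mu_N-d\bar\rho)^{\otimes 2}$; to bound its expectation against $d\rho_N$, I would apply \eqref{ineg} with a small parameter $a>0$ chosen so that $\|a\,G\|_{L^1}\le\delta_0$, and Proposition \ref{LargeDeviation} then yields
\[
\mathbb{E}\int_{\{x\ne y\}} G(x-y)(d\mu_N-d\bar\rho)^{\otimes 2}\le \frac{1}{a}\,\mathcal{H}_N + \frac{C}{N^\theta},
\]
which combines with the $\mathcal{K}_N^\eta$ contribution to give the desired $\le C\,E_N^\eta+C/N^\theta$ bound.

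Collecting all contributions and optimizing $\eps$ and $\delta$ as functions of $N$ closes the argument. The main obstacle I expect is the control of the $G$-term: the kernel $G$ has $L^1$ norm too large for Proposition \ref{LargeDeviation} to apply directly, which is why the scaling parameter $a$ in \eqref{ineg} is crucial; one also needs to verify that $G$ satisfies the non-negativity, symmetry, and pointwise gradient bounds required by Proposition \ref{LargeDeviation}, which follow from the standard decay properties of the inverse Fourier transform of $\chi(\xi)/(1+|\xi|^{d-\alpha})$ given $0<\alpha<d$ and $\chi\in L^\infty$.
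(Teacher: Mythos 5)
Your proposal follows the same scheme as the paper's proof: split $I_N$ at scale $\delta$, control the close pairs via the second assertion of Proposition \ref{boundEN}, switch to $V_\eps$ on the far part through Lemma \ref{lemmatruncate}, and then apply Lemma \ref{lemfourier2}, which yields the $\hat V_\eps$ contribution (brought back to a $V$-integral via $V_\eps\leq V+\eps$ and controlled by Proposition \ref{boundEN}) together with the Fourier tail $\chi(\xi)/(1+|\xi|^{d-\alpha})$. For the tail you invert to the physical-space kernel $G$ with $G(x)\lesssim |x|^{-\alpha}\in L^p$, remove the diagonal, and apply \eqref{ineg} followed by Proposition \ref{LargeDeviation}. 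That is precisely what the paper does; the paper arrives at the displayed bound $I_N\leq C\,E_N^\eta+C\,\mathcal{H}_N+C\,\mathcal{K}_N+C/(\eps^\alpha N)+C\,\eps/\delta^{k'}+C/N^\theta+C\,\eta^\theta$ and then optimizes in $\eps,\delta$.

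You are slightly more explicit than the paper on two points, and both observations are correct and worth keeping. First, the $G$-kernel has $L^1$ norm of order one, not $\leq\delta_0$, so the small parameter $a$ (the $\alpha$ in \eqref{ineg}) must be exploited so that $\|a\,G\|_{L^1}\leq\delta_0$ before Proposition \ref{LargeDeviation} can be invoked; the paper leaves this implicit when it writes ``using the convexity inequality and Prop. \ref{LargeDeviation}''. Second, after the Fourier step one is left with the untruncated $\mathcal{K}_N$ rather than $\mathcal{K}_N^\eta$, and the difference involves the bounded long-range kernel $V\,(1-\chi(\cdot/\eta))$ whose contribution must be controlled (by $C\,\mathcal{H}_N+C/N^\theta$, as you propose, or equivalently by the Jabin–Wang-type argument the paper uses in Section 7). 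The paper's version of this corollary carries the uncompensated $C\,\mathcal{K}_N$ term through its final display and relies on the Section 7 discussion of the long-range part to close the loop; your proposal internalizes that step, which is cleaner and consistent with the stated conclusion $I_N\leq C\,E_N^\eta+C/N^\theta$.
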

\begin{proof}
  The proof closely follows the one for Corollary \eqref{controlINvanishing}. More precisely all arguments up to obtaining \eqref{intermed3'} are identical and hence we have still
\[
  \begin{split}
I_N &= -\int_{\Pi^{dN}} d\rho_N \int_{\{x\not=y\}}
\nabla V(x-y) \cdot (\psi(x)-\psi(y)) (d\mu_N-d \bar\rho)^{\otimes 2}\\
&\leq -\int_{\Pi^{dN}} d\rho_N \int_{\{x\not=y\}}
\nabla V_\eps(x-y) \cdot (\psi(x)-\psi(y)) (d\mu_N-d \bar\rho)^{\otimes 2}\\
&\ +C\,\frac{\eps}{\delta^{k'}}+C\,\mathcal{K}_N +\frac{C}{N^\theta}+C\,\eta^\theta.
  \end{split}
  \]
  From this point, instead of Lemma \ref{lemfourier}, we apply Lemma \ref{lemfourier2}. Let us take $V_\eps$ as constructed in  Lemma \ref{lemmatruncate}. From Assumptions \eqref{hyp05}-\eqref{hyp06}, we have that
  \[
  |\hat V(\xi) - \hat V(\zeta)| \le C \frac{|\xi-\zeta|}{1+|\zeta|} \Bigl(\hat V(\zeta) + \frac{\hat K_\eps(\zeta)}{1+|\zeta|^{d-\alpha} }\Bigr),
  \]
  so that \eqref{diffhatV} is satisfied with $\chi=\hat K_\eps$ and Lemma \ref{lemfourier2} yields
\[
  \begin{split}
I_N &\leq C\,\int_{\Pi^{dN}} d\rho_N \int_{\{x\not=y\}}
V_\eps(x-y)\, (d\mu_N-d \bar\rho)^{\otimes 2}\\
&\ +C\,\int_{\Pi^{dN}} d\rho_N \int \frac{|\hat\mu_N-\hat{\bar\rho}|^2}{1+|\xi|^{d-\alpha}}\,\hat K_\eps(\xi)\,d\xi\\ 
&\ +C\,\frac{\eps}{\delta^{k'}}+C\,\mathcal{K}_N +\frac{C}{N^\theta}+C\,\eta^\theta.
  \end{split}
  \]
  Because we have that $V_\eps\leq C\,V + \varepsilon$, by Lemma \ref{lemmatruncate}, we deduce that
  \[
  \begin{split}
I_N &\leq C\,\int_{\Pi^{dN}} d\rho_N \int_{\{x\not=y\}}
V(x-y)\, (d\mu_N-d \bar\rho)^{\otimes 2}\\
&\ +C\,\int_{\Pi^{dN}} d\rho_N \int \frac{|\hat\mu_N-\hat{\bar\rho}|^2}{1+|\xi|^{d-\alpha}}\,\hat K_\eps(\xi)\,d\xi\\ 
&\ +C\,\frac{\eps}{\delta^{k'}}+C\,\mathcal{K}_N +\frac{C}{N^\theta}+C\,\eta^\theta.
  \end{split}
  \]
  Using Prop. \ref{boundEN}, we then have that
  \begin{equation}
  \begin{split}
I_N &\leq C\,E_N^{\eta} +C\,\int_{\Pi^{dN}} d\rho_N \int \frac{|\hat\mu_N-\hat{\bar\rho}|^2}{1+|\xi|^{d-\alpha}}\,\hat K_\eps(\xi)\,d\xi\\ 
&\ +C\,\frac{\eps}{\delta^{k'}}+C\,\mathcal{K}_N +\frac{C}{N^\theta}+C\,\eta^\theta.
  \end{split}\label{intermed0''}
  \end{equation}
  By taking the inverse Fourier transform, we can write
  \[
  \begin{split}
& \int_{\Pi^{dN}} d\rho_N \int \frac{|\hat\mu_N-\hat{\bar\rho}|^2}{1+|\xi|^d}\,K_\eps(\xi)\,d\xi=\int_{\Pi^{dN}} d\rho_N \int_{\Pi^{2d}}
K_\eps\star G(x-y)\, (d\mu_N-d \bar\rho)^{\otimes 2},
    \end{split}
  \]
  where $\hat G(\xi)=(1+|\xi|)^{-d+\alpha}$ and hence $G(x)\leq C\,\frac{1}{|x|^\alpha}$. We may remove the diagonal
  \[
  \begin{split}
    & \int_{\Pi^{dN}} d\rho_N \int \frac{|\hat\mu_N-\hat{\bar\rho}|^2}{1+|\xi|^d}\,K_\eps(\xi)\,d\xi\\
    &\qquad\leq\int_{\Pi^{dN}} d\rho_N \int_{\{x\neq y\}}
K_\eps\star G(x-y)\, (d\mu_N-d \bar\rho)^{\otimes 2}+C\,\frac{1}{\eps^\alpha N},
    \end{split}
  \]
  and then simply bound $K_\eps\star G$ by $1/|x|^\alpha$ to get
  \[
  \begin{split}
    & \int_{\Pi^{dN}} d\rho_N \int \frac{|\hat\mu_N-\hat{\bar\rho}|^2}{1+|\xi|^d}\,K_\eps(\xi)\,d\xi\\
    &\qquad\leq C\,\int_{\Pi^{dN}} d\rho_N \int_{\{x\neq y\}}
      \frac{1}{|x-y|^\alpha}\, (d\mu_N-d \bar\rho)^{\otimes 2}+C\,\frac{1}{\eps^\alpha N}.
    \end{split}
  \]
  By using the convexity inequality and Prop. \ref{LargeDeviation}, we finally obtain that
 \[
  \begin{split}
    & \int_{\Pi^{dN}} d\rho_N \int \frac{|\hat\mu_N-\hat{\bar\rho}|^2}{1+|\xi|^d}\,K_\eps(\xi)\,d\xi\leq C\,\mathcal{H}_N+C\,\frac{1}{\eps^\alpha N},
    \end{split}
  \]
  and inserting this into \eqref{intermed0''}
\[
  \begin{split}
I_N &\leq C\,E_N^{\eta} +C\,\mathcal{H}_N+C\,\frac{1}{\eps^\alpha N} +C\,\frac{\eps}{\delta^{k'}}+C\,\mathcal{K}_N+\frac{C}{N^\theta}+C\,\eta^\theta,
  \end{split}
  \]
  concluding by optimizing on $\delta$ and $\eps$. 
\end{proof}
\section{Concluding the proofs of Theorems \ref{Conv} and \ref{Convgene}}
The proofs of Theorems \ref{Conv} and \ref{Convgene} now follow from a straightforward Gronwall argument starting from \eqref{INEG}. For the proof of Theorem \ref{Conv}, we rescale \eqref{INEG} to deduce
\[
\sigma_N\,E_N(t)\leq \sigma_N E_N(t=0)+\int_0^t I_N(s)\,ds,
\]
with
\[
I_N=\int_{\Pi^{d\,N}} \int_{\{x\neq y\}} \nabla V(x-y)\cdot (\psi(x)-\psi(y))\, (d\mu_N-d\bar\rho)^{\otimes2}\,d\rho_N,
\]
and
\[
\psi(x)=\frac{\sigma_N}{2}\,\nabla \log \frac{\bar\rho}{G_{\bar\rho}}(x).
\]
Note that $\psi\in W^{s,\infty}$ uniformly in $N$ since $\sigma_N\,\log G_{\bar\rho}$ is smooth uniformly in $N$ (but the scaling by $\sigma_N$ is needed here). We may hence apply Corollary \ref{controlINvanishing} to obtain that
\[
I_N\leq C\,\sigma_N\,\mathcal{K}_N(t)+\eta(N),
\]
and hence recalling that $E_N=\mathcal{K}_N(t)+\mathcal{H}_N(t)$,
\begin{equation}
\sigma_N\,\mathcal{K}_N(t)+\sigma_N\,\mathcal{H}_N(t)\leq \sigma_N\,\mathcal{K}_N(t=0)+\sigma_N\,\mathcal{H}_N(t=0)+C\,\int_0^t \sigma_N\,\mathcal{K}_N(s)\,ds+\eta(N). \label{gronwall0}
  \end{equation}
We recall from Lemma \ref{KNgeq0} that $\sigma_N\,\mathcal{K}_N\geq -\eta(N)$ and hence applying Gronwall lemma, we conclude from \eqref{gronwall0} that
\[
\sigma_N\,\mathcal{K}_N(t)\leq e^{C\,t}\,\left(\sigma_N\,\mathcal{K}_N(t=0)+\sigma_N\,\mathcal{H}_N(t=0)+\eta(N)\right),
\]
finishing the proof of Theorem \ref{Conv}. For the proof of Theorem \ref{Convgene}, as mentioned
just after \eqref{ineg}, the control of $E_N$ required truncating interactions after some distance and
it remains now to control the long range part in $V$ which we need to deal with on it own. The procedure is well explained in \cite{BrJaWa1}.  First we recall that this long-range part reads
$$W(x)= V(x) (1- \chi(|x|/\eta))$$
and we define ${\mathcal K}_N^W$ given by \eqref{modulatedenergy} with  $G_N^W$ and $G_{\bar\rho_N}^W$ replacing $V$
 by $W$ in \eqref{GN} and \eqref{G}.
The different results in Section 6 actually concern $E_N^\eta$ and  we need to evaluate the contribution
of ${\mathcal K}_N^W$ which is the complement. Calculating its evolution in time and using the fact that the 
kernel $V$ is $W^{2,\infty}$ far from 0, we can prove that
$$\frac{d}{dt} {\mathcal K}_N(G_N^W\vert G_{\bar\rho_N}^W) \le C {\mathcal H}_N(\rho_N\vert\bar\rho_N) + \frac{C}{N}.
$$
The interested readers are referred to \cite{BrJaWa} for more explanations and to \cite{BrJaWa1} for the forthcoming single complete document.
Plugging everybody together allows to get Theorem \ref{Convgene}.
%
\section{An attractive interesting case: The Patlak-Keller-Segel Kernel.}
In space dimension $2$, the Patlak-Keller-Segel system
reads 
    \[
   \left\{ \begin{split}
      &\partial_t \bar\rho+\mbox{div}\,(\bar\rho\,u)=\sigma\,\Delta\,\bar\rho,\\
      &u=\nabla\Phi,\quad -\Delta\Phi=2\,\pi\,\lambda\,\bar\rho.
      \end{split}\right.
    \]
This is an important system in biology for instance. Classical solutions for such system may not exist for all times as the singular attractive interactions can lead to concentration (see J. Dolbeault and B. Perthame \cite{DoPe}):
  \begin{itemize}
  \item Global existence of classical solution  if $\lambda\leq 4\,\sigma$ (or $\lambda\leq 2\,d\,\sigma$).
    \item Always blow-up if $\lambda>4\,\sigma$.
    \end{itemize}
 Based on the free energy of the system
    \[
\int \bar\rho\,\log\bar\rho\,dx+\frac{\lambda}{2}\,\int \log |x-y|\,\bar\rho(x)\,\bar\rho(y)\,dx\,dy.
    \]
Blanchet-Dolbeault-Perthame (see \cite{BlDoPe}) show  existence of  global weak satisfying free energy control with subcritical mass. Note that our modulated free energy may be seen as a particle version 
of such free energy.  Particle approximation of the Patlak-Keller-Segel system has been studied by several authors such as \cite{CaPe}, \cite{GoQu}, \cite{HaSc} for example. Recently N.~Fournier and B. Jourdain (see \cite{FoJo}) proved  limit for  $\lambda<\sigma$ with no quantitative estimates. In all these papers, the particle system is also studied from an existence view point which is an important and difficult question. This is not the objective of our study which focuses on quantitative estimates under assumptions of existence of solutions.

 To prove a quantitative estimate between the particle approximation of the Patlak-Keller-Segel 
 system and its formal limit, the upper-bound of \eqref{IN} in the inequality \eqref{INEG} encoding the 
 propagation of $E_N$ is more simple than for the repulsive case. It uses that $|\nabla V(x)| \le C/|x|$ and Theorem \cite{JaWa}.
 The lower bound control of $ {\mathcal K}_N$ is more complicated: We choose to prove an upper bound of the 
 opposite. The method is quite similar that the repulsive case when $\sigma>0$ is fixed. It uses an appropriate cut-off smooth function close to singularity with a regularization of the kernel and the proof that for an appropriate cut-off size the large deviation function is zero. This uses the Logarithmic Hardy-Littlewood-Sobolev inequality to show the maximum is attained for $\eta$ small
 enough for $\mu= \bar\rho$.  The interested readers are referred to \cite{BrJaWa} for more 
 explanations and to \cite{BrJaWa1} for a single complete document.

\section{Conclusion.}
Using the right physics is the key in \cite{BrJaWa}  to make the link between two important
results namely  \cite{Se} and \cite{JaWa}: This link allows to consider more general 
singular kernels with possible presence of viscosity. The method provides a statistical 
control with  a large class of attractive-repulsive interactions  but some works are needed
 to obtain the best convergence rate and it is not yet fully clear how general the interactions
can be. Note that we have not used the presence of the diffusive term \eqref{diffusive} in the inequality
concerning the free-energy which could perhaps help to improve the rate of convergence
when $\sigma>0$ is fixed.
   It could be also interesting to study the case with blow-up for attractive kernels namely  
the super-critical cases. An other important problem could be non-gradient flow systems
and hamiltonian systems.  It could also be the extension of the work to the Keller-Segel
parabolic-parabolic equations, see \cite{To}. An existence result improving the results 
by \cite{CaPe},  \cite{FoJo} for the particle approximation of the Patlak-Keller-Segel
 system is also a challenging and interesting problem. 

\bigskip

\noindent {\bf Acknowledgments.} The first author wants to thank F. Golse and F.  Merle for the invitation to present the work \cite{BrJaWa} at IH\'ES for the Laurent Schwartz seminar on November 2019  and for questions which have been the starting point to design the content of the document to help readers to understand the main steps of this new approach. The second author wants to thank  S. Serfaty for sharing many insights on her result. The two first authors thank also L. Saint-Raymond for discussions on many-particle systems and their mean field limit. The first author is partially supported by the SingFlows project, grant ANR-18-CE40-0027. The second author is  partially supported by NSF DMS Grant 161453, 1908739, and NSF Grant RNMS (Ki-Net) 1107444.

\end{document}